\documentclass[12pt,a4paper]{article}

\usepackage[left=30mm,right=30mm,top=25mm,bottom=30mm]{geometry}
\usepackage{epsfig}
\usepackage{epstopdf}
\usepackage{soul}
\usepackage{xfrac}
\usepackage{comment}
\usepackage[percent]{overpic}

\usepackage{color}
\usepackage{amsthm,amsmath,amssymb}
\usepackage{booktabs}
\usepackage{mathpazo}
\usepackage{microtype}

\usepackage{bm}
\usepackage{sectsty}
\usepackage{subcaption}
\usepackage[
	pdftitle={The inf spectrum of moduli space},
	pdfauthor={Hugo Parlier},
	ocgcolorlinks,
	linkcolor=linkred,
	citecolor=linkred,
	urlcolor=linkblue]
{hyperref}

\definecolor{linkred}{RGB}{112, 11, 75} 
\definecolor{linkblue}{RGB}{216, 56, 241}

\usepackage[hang,flushmargin]{footmisc}
\usepackage{enumitem}
\usepackage{titlesec}
	\titlespacing{\section}{0pt}{12pt}{0pt}
	\titlespacing{\subsection}{0pt}{6pt}{0pt}
	
\titlelabel{\thetitle.\quad}

\makeatletter 

\long\def\@footnotetext#1{%
\H@@footnotetext{%
\ifHy@nesting 
\hyper@@anchor{\@currentHref}{#1}%
\else 
\Hy@raisedlink{\hyper@@anchor{\@currentHref}{\relax}}#1%
\fi 
}}

\def\@footnotemark{%
\leavevmode 
\ifhmode\edef\@x@sf{\the\spacefactor}\nobreak\fi 
\H@refstepcounter{Hfootnote}%
\hyper@makecurrent{Hfootnote}%
\hyper@linkstart{link}{\@currentHref}%
\@makefnmark 
\hyper@linkend 
\ifhmode\spacefactor\@x@sf\fi 
\relax 
}%

\ifFN@multiplefootnote%
\renewcommand*\@footnotemark{%
\leavevmode 
\ifhmode 
\edef\@x@sf{\the\spacefactor}%
\FN@mf@check 
\nobreak 
\fi 
\H@refstepcounter{Hfootnote}%
\hyper@makecurrent{Hfootnote}%
\hyper@linkstart{link}{\@currentHref}%
\@makefnmark 
\hyper@linkend 
\ifFN@pp@towrite 
\FN@pp@writetemp 
\FN@pp@towritefalse 
\fi 
\FN@mf@prepare 
\ifhmode\spacefactor\@x@sf\fi 
\relax%

\makeatother 

\theoremstyle{plain}
\newtheorem{theorem}{Theorem}[section]
\newtheorem{proposition}[theorem]{Proposition}
\newtheorem{lemma}[theorem]{Lemma}

\theoremstyle{definition}
\newtheorem{definition}[theorem]{Definition}

\newtheorem{remark}[theorem]{Remark}

\newcommand{\eps}{{\varepsilon}}
\newcommand{\sys}{{\rm sys}}
\newcommand{\ii}{{\mathit i}}
\newcommand{\Sys}{{\mathfrak S}}
\newcommand{\R}{{\mathbb R}}
\newcommand{\D}{{\mathbb D}}
\newcommand{\Hyp}{{\mathbb H}}
\newcommand{\N}{{\mathbb N}}
\newcommand{\Q}{{\mathbb Q}}
\newcommand{\F}{{\mathbb F}}
\newcommand{\TT}{{\mathbb T}}
\newcommand{\K}{{\mathbb K}}
\newcommand{\Z}{{\mathbb Z}}
\newcommand{\C}{{\mathcal C}}
\newcommand{\GG}{{\mathbb G}}
\newcommand{\GGS}{{\mathbb G}(S)}
\newcommand{\CC}{{\mathcal C}}
\newcommand{\BB}{{\mathcal B}}
\newcommand{\arc}{{\mathcal A}}
\newcommand{\DD}{{\mathcal D}}
\newcommand{\Sp}{{\mathbb S}}
\newcommand{\G}{{\mathcal G}}
\newcommand{\T}{{\mathcal T}}
\newcommand{\M}{{\mathcal M}}
\newcommand{\PSL}{{\rm PSL}}
\newcommand{\area}{{\rm area}}
\newcommand{\arcsinh}{{\,\rm arcsinh}}
\newcommand{\arccosh}{{\,\rm arccosh}}
\newcommand{\diam}{{\rm diam}}
\newcommand{\dias}{{\rm dias}}
\newcommand{\numS}{{\rm Kiss}(S)}
\newcommand{\length}{{\rm length}}
\newcommand{\rank}{{\rm rank}}
\newcommand{\kk}{\Bbbk}
\newcommand{\PP}{\mathcal P}
\newcommand{\FF}{\mathcal F}
\newcommand{\II}{\mathcal I}
\newcommand{\cf}{{\it cf.}}
\newcommand{\ie}{{\it i.e.}}
\newcommand{\curves}{\mathcal H}
\newcommand{\collar}{\mathcal C}
\newcommand{\ZZ}{\mathbb{Z}}
\newcommand{\calP}{\mathcal{P}}
\newcommand{\injrad}{\mathrm{injrad}}
\newcommand{\Homeo}{\mathrm{Homeo}}
\newcommand{\design}{\mathbf{D}_g}
\newcommand{\thick}{\mathbf{T}_g}
\newcommand{\model}{\mathbf{M}_g}
\newcommand{\MCG}{\mathrm{Mod}}
\newcommand{\infspec}{\mathrm{Inf\,Spec}}

\def\bull{\vrule height 1ex width .8ex depth -.2ex}
\renewcommand{\labelitemi}{\bull}

\renewcommand{\footnoterule}{\vfill \rule{0.25\textwidth}{0.5pt} \vspace{4pt}}

\linespread{1.25}

\sectionfont{\large \bfseries}
\subsectionfont{\normalsize}

\setlength{\parindent}{0pt}
\setlength{\parskip}{6pt}

\long\def\symbolfootnote[#1]#2{\begingroup%
\def\thefootnote{\fnsymbol{footnote}}\footnote[#1]{#2}\endgroup}

\def\blfootnote{\xdef\@thefnmark{}\@footnotetext}

\usepackage{mathtools}
\DeclarePairedDelimiter\floor{\lfloor}{\rfloor}
\DeclarePairedDelimiter\ceil{\lceil}{\rceil}
\usepackage[sort,nocompress]{cite}

\begin{document}

{\Large\bfseries The inf spectrum of moduli space}

{\large Ara Basmajian\symbolfootnote[1]{\small 
Supported by a Dolciani faculty research grant, a PSC-CUNY research grant, and a grant from the Simons Foundation (TSM 00013865 A.B.)}, 
Sayantika Mondal, and
Hugo Parlier\symbolfootnote[3]{\small Supported by ANR-SNF Grant number 200021E\_238147
(SUGAR).\\
{\em 2020 Mathematics Subject Classification:} Primary: 32G15. Secondary: 57K20, 30F60. \\
{\em Key words and phrases:} hyperbolic surfaces, closed geodesics, model surface, moduli spaces}
}

\vspace{0.5cm}
{\bf Abstract.} Given a surface of genus $g \geq 2$, its inf spectrum is the ordered multiset of 
inf lengths of curves on the surface, the infimum being over all hyperbolic metrics. 
We show that the inf spectrum has exponential growth with coefficients that only depend on $g$, improving the earlier double exponential upper bound. The proof involves developing 
a number of tools including the process of ``thickening''   a surface and the notion of 
model surfaces. These tools may be of independent interest.

\vspace{0.5cm}

\section{Introduction} \label{sec:intro}

On a hyperbolic surface of finite type, which we assume to be orientable, any free homotopy class of a closed curve has a unique (geodesic) minimizer. Similarly, any mapping class group orbit of a closed curve also has a minimal representative. Now among all hyperbolic surfaces in a(n) (augmented) Teichm\"uller space, the mapping class group orbit of a closed curve has a uniquely defined metric where the length of this curve is minimal. This metric, using the metaphor that it is stylish to look as short as possible, is called a {\it designer metric}. Note that if the curve is not filling, then the designer metric lies on the boundary of the Deligne-Mumford compactification of moduli space, that is on a noded hyperbolic surface. 

And the collection of these doubly minimal lengths $m_{[\gamma]}$, minimal among representatives of a mapping class group orbit $[\gamma]$ of curves $\gamma$ and among hyperbolic surfaces in a moduli space $\M$, is referred to as the {\it inf spectrum}:
\[
\infspec(\M):= \{m_{[\gamma]} : [\gamma] \mbox{ a mapping class group orbit of }\gamma, \gamma \mbox{ a closed curve} \}.
\]

We study the set of designer metrics and the associated inf spectrum. For simplicity, we focus on the moduli space of closed surfaces of genus $g$, denoted $\M_g$, but our results hold for all finite-type surfaces, namely because all other moduli spaces can be embedded in the compactification of closed ones. 

Our main result is lower and upper bounds on the growth rate of the inf spectrum, addressing a question that arises from \cite{BM}, where the inf spectrum was first defined. 

\begin{theorem}\label{theoremA}
There exists $L_0 >0$ so that for all $L \geq L_0$ the inf spectrum satisfies 
\[
A \frac{e^{L}}{2L} \leq \# \bigl\{[\gamma] \mid m_{[\gamma]} \leq L \bigr\} \leq Be^{C L}
\]
where $A,B$ and $C$ are constants which only depend on $g$. 
The constant $A$ can be taken to be
$$
A= \frac{1}{6} \left( g+ \floor*{\frac{g^2+3}{12}}\right).
$$
\end{theorem}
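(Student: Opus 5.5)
The plan is to prove the two inequalities separately. The lower bound comes from a prime geodesic count on cusped pairs of pants sitting in the Deligne--Mumford boundary. The upper bound comes from moving every designer metric into the thick part and counting on finitely many model surfaces.

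\emph{Lower bound.} The starting observation is that $m_{[\gamma]}\le \ell_X(\gamma)$ for every $X$ in the augmented Teichm\"uller space, including noded surfaces. Fix a pair of pants $P\subset S_g$ bounded by (possibly coinciding) simple closed curves. Pinch its boundary curves to nodes, so that $P$ becomes a hyperbolic thrice-punctured sphere $P_0$ inside a noded surface. Any primitive closed curve $\gamma$ filling $P$ then satisfies $m_{[\gamma]}\le \ell_{P_0}(\gamma)$. By the prime geodesic theorem on $P_0$, the number of primitive unoriented closed geodesics of length $\le L$ is $\sim e^L/(2L)$. I will discard the non-filling ones, namely the boundary-parallel curves, of which there are only finitely many.

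Next I need to control when two such curves are in the same $\MCG(S_g)$ orbit. If $\gamma,\gamma'$ both fill $P$ and $\phi(\gamma)=\gamma'$, then $\phi(P)=P$ up to isotopy, since $P$ is the subsurface filled by $\gamma$. Hence $\phi$ restricts to a mapping class of $P$ that may permute the boundary. Such maps are isometries of $P_0$ up to homotopy, and there are at most $6$ of them. So each orbit meets the set of filling geodesics in $P_0$ in at most $6$ curves, which produces the factor $\tfrac16$.

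Finally, pants of different topological types, meaning different $\MCG$-orbits of embeddings $P\hookrightarrow S_g$ (including the types where boundary curves are glued to each other), give disjoint families of orbits. I then count these types: separating versus nonseparating configurations, and the partitions of the remaining genus among the complementary pieces. I expect this count to yield $g+\floor*{\frac{g^2+3}{12}}$, and summing over types gives the constant $A$ for $L\ge L_0$.

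\emph{Upper bound.} Let $X_\gamma$ be the designer metric of $[\gamma]$, possibly noded. First, I apply the thickening procedure to obtain a surface $Y$ in the $\eps$-thick part of $\M_g$, with a fixed $\eps=\eps_g$. The key requirement is
\[
\ell_Y(\gamma)\le C_1\,\ell_{X_\gamma}(\gamma)+C_2 ,
\]
with constants depending only on $g$. Second, the thick part is compact, so it is covered by finitely many bi-Lipschitz neighbourhoods of model surfaces $\model$, with Lipschitz constant $K$ and cardinality $N=N(g)$. Thus $\gamma$ has length at most $K(C_1L+C_2)$ on one of these model surfaces. Third, on each model surface the number of closed geodesics of length $\le T$ is at most $c\,e^{T}$ by standard counting. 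Since each orbit with $m_{[\gamma]}\le L$ is injectively assigned a geodesic of length $\le K(C_1L+C_2)$ on some model surface, this gives
\[
\#\{[\gamma] \mid m_{[\gamma]}\le L\}\le N c\, e^{KC_2}e^{KC_1L}=Be^{CL}.
\]

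\emph{Main obstacle.} The hard step is the thickening estimate. A geodesic may cross a very thin collar of a short curve $\alpha$ many times. Replacing the collar by one whose core has length $\eps$ could in principle add a fixed cost per crossing, and even after untwisting it could create large twisting contributions. What makes the estimate plausible is that each crossing of a collar around a curve of length $\ell<\eps$ already costs length at least about $2\log(1/\ell)$, which is bounded below by a positive constant. So a per-crossing additive error can be absorbed multiplicatively. The twisting also needs to be handled: I would first choose, within the $\MCG$-orbit, a representative with minimal twisting around each short curve; this is legitimate because we only care about orbits. With that normalisation, the length after thickening should be controlled crossing by crossing. I would then handle the strands that stay inside the thick part by a Lipschitz map between the thick parts of $X_\gamma$ and $Y$, taking the complement of collars to its counterpart.
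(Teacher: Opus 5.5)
Your lower bound is the paper's argument: ideal pants in the boundary of moduli space, Huber's count $e^L/(2L)$, the factor $\frac16$ from the order-$6$ symmetry group of $P$, and disjointness of orbits across pants types. You only anticipate the type count, but it is the elementary one behind Proposition~\ref{enum: counting}. There is one type with two cuffs glued (a one-holed torus), one with connected complement, $g-2$ with two complementary pieces, and $\floor*{\frac{g^2+3}{12}}$ with three. The total is $g+\floor*{\frac{g^2+3}{12}}$.

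The upper bound has a genuine gap. The key requirement $\ell_Y(\gamma)\le C_1\ell_{X_\gamma}(\gamma)+C_2$ with $Y$ in a \emph{fixed} thick part $\M_g^{\eps}$ is false. In fact, no fixed compact family of closed model surfaces can work.

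\begin{itemize}
\item On a closed surface of systole at least $\eps$, the injectivity radius is at least $\eps/2$, so two geodesic segments of length less than $\eps/2$ meet at most once. Cutting $\gamma$ into roughly $2\ell_Y(\gamma)/\eps$ such segments shows that $i(\gamma,\gamma)$ is at most of order $(\ell_Y(\gamma)/\eps)^2$. Hence $\ell_Y(\gamma)$ is at least of order $\eps\sqrt{i(\gamma,\gamma)}$ for every $Y\in\M_g^{\eps}$.
\item Since $i(\gamma,\gamma)$ is an invariant of the orbit, no choice of representative escapes this bound.
\item The curves $ab^k$ on an embedded pair of pants have $k$ self-intersections but inf length $2\arccosh(2k+1)$, roughly $2\log k$. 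These are the lengths in Proposition~\ref{prop: infspecproperties}(5). On any $Y\in\M_g^\eps$ their length is therefore at least of order $\eps e^{m/4}$, not $O(m)$.
\end{itemize}

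Your proposed remedy does not address this. A Dehn twist shifts the windings of all strands through a collar by the same amount, so only a common twist can be removed. Curves like $ab^k$ are not affected by twists about the cuffs at all. The real obstruction is the individual winding $w$ of a strand: relative winding across a short curve, or a type I strand circling a cusp up to about $n$ times. Such a strand costs about $w\eps$ once the core has length $\eps$, but only about $2\log w$ on $X_\gamma$. So the per-crossing error is not bounded and cannot be absorbed multiplicatively.

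The missing idea is to let the thickening depend on $n=i(\gamma,\gamma)$:
\begin{itemize}
\item Thicken only to systole $\frac1{5n}$, so that the total winding cost is at most $n\cdot\frac1n$ (Lemma~\ref{lem: typeIII}).
\item Accept a family $\model^n$ of model surfaces whose size $t_g(n+1)^{3g-3}$ grows, but only polynomially in $n$.
\item The quasiconformal constant $Q_g$ stays independent of $n$, because the length grid has mesh $\frac1n$ and all length ratios stay below $\max\{5,B_g\}$ (Lemma~\ref{lem: model metric properties}).
\item Finally, turn the polynomial factor into an exponential in $L$ using the universal bound $i(\gamma,\gamma)\le 2e^{2L}$.
\end{itemize}
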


The exponential growth is of course reminiscent of the growth of closed geodesics on a hyperbolic surface, and this is no coincidence. To show the result, we partition the set of (mapping class group orbits of) curves according to their self-intersection number $n$, and look at their associated designer metrics. The first step in the proof is to ``thicken" the surface with the designer metric to a new metric that has systole lower bounded by $\frac{1}{5n}$ (the 5 in the denominator is an artifact of the proof).
The new metric possibly increases lengths by a multiplicative factor but the factor only depends on topology. This is the essence of the following proposition which we put here as it may be of independent interest. 

\begin{proposition} (Proposition \ref{prop: thickened metric})
For $\gamma$ a closed curve with $i(\gamma,\gamma)=n$, there exists a metric $Y_{\gamma}$ such that $\ell_{\gamma}(Y_{\gamma})\leq c\, m_{\gamma}$ where $\sys(Y_\gamma)\geq \frac{1}{5n}$. The constant $c \geq 1$ depends only on the topology of $X$.
\end{proposition}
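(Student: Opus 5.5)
Start from the designer metric $X_\gamma$ realizing $m_\gamma$; it may be a noded surface, and it may have many short curves. The plan is to lengthen, one at a time, every simple closed geodesic of length $<\frac{1}{5n}$ up to length $\frac{1}{5n}$. Each such step should multiply $\ell_\gamma$ by at most a constant depending only on the topology. There are at most $3g-3$ short curves, since they are disjoint by the collar lemma, so the total factor $c$ is the product of at most $3g-3$ such constants.

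\textbf{Step 1: use the collar lemma to control how $\gamma$ crosses short curves.} Let $\alpha$ be a simple geodesic with $\ell(\alpha)=\epsilon<\frac{1}{5n}$. Its collar has width $w(\epsilon)=\arcsinh(1/\sinh(\epsilon/2))\approx \log(1/\epsilon)$. Each crossing of $\gamma$ through the collar costs length at least $2w(\epsilon)$. Hence $k=i(\gamma,\alpha)$ satisfies $k\log(1/\epsilon)\lesssim m_\gamma$.

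Conversely, minimality of the designer metric constrains the twisting of $\gamma$ around $\alpha$. Let $\gamma$ cross the collar $k$ times with twist parameter $t$ (measured in units of $\ell(\alpha)$). Then the arcs of $\gamma$ inside the collar produce roughly $k\cdot|t|$ self-intersections. Since $i(\gamma,\gamma)=n$, the twisting is at most about $n/k$ turns.

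\textbf{Step 2: deform the metric.} Use Fenchel--Nielsen coordinates adapted to a pants decomposition containing all the short curves. Increase $\ell(\alpha)$ from $\epsilon$ to $\frac{1}{5n}$, keeping all other length parameters fixed and adjusting twists suitably. Compare lengths by a standard estimate: a crossing arc of $\gamma$ through the collar of $\alpha$ has length about $2\log(1/\epsilon)+2\log\bigl(\sinh(|\tau|\epsilon/2)\cdots\bigr)$, where $\tau$ is the twist. After lengthening $\alpha$, the collar width drops to about $\log(5n)$ and the twisting contribution is at most $|t|/(5n)\le 1$. So each crossing has length at most $2\log(5n)+O(1)$.

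Crossings of $\alpha$ are therefore not made more expensive, provided $\log(5n)\lesssim \log(1/\epsilon)$, which holds because $\epsilon<\frac{1}{5n}$. Outside the collars, the pants with fixed boundary lengths change only boundedly; one can use a quasiconformal or Lipschitz map between thick parts, with constant depending only on the lengths involved. If $\alpha$ is a node ($\epsilon=0$, and $\gamma$ disjoint from $\alpha$), opening it to length $\frac{1}{5n}$ changes the cusp neighbourhood. Then $\gamma$ stays in the complement of the new collar with length changed by a bounded factor.

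\textbf{Main obstacle.} The hard step is the uniformity of the comparison outside the collars. When some of the other boundary lengths are moderate or large, the Lipschitz constant between pants could depend on them. I would handle this by using a Bers pants decomposition, so that all pants curves have length at most the Bers constant $B_g$. After lengthening, all lengths lie in $[\frac{1}{5n}, B_g]$.

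The remaining issue is that the comparison of the thick parts must not depend on $n$. I would argue that the change only affects the collar region. The geometry of the pants outside standard collars depends continuously on boundary lengths in $[0,B_g]$, which is a compact range including degeneration. This gives a bi-Lipschitz constant depending only on $g$, and the twist bound from Step 1 makes the collar estimates uniform. This yields $\ell_\gamma(Y_\gamma)\le c\,m_\gamma$ with $\sys(Y_\gamma)\ge\frac{1}{5n}$.
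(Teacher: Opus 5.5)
Your plan follows the paper's: take a Bers pants decomposition containing the short curves, lengthen each short curve while keeping the other Fenchel--Nielsen data, bound the crossing arcs by collar widths plus a winding bound coming from $i(\gamma,\gamma)=n$, bound the rest by bi-Lipschitz maps from geometric convergence of pants, and iterate. Lengthening to $\frac{1}{5n}$ instead of the paper's $\frac{1}{n}$ is harmless, since the additive $O(1)$ you lose per crossing is absorbed by the lower bound $2\arccosh 5$ on the length of each crossing.

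\textbf{Missing arcs.} Your decomposition of $\gamma$ near $\alpha$ misses the arcs that enter the collar of $\alpha$ and leave through the same boundary (the paper's type I arcs). These occur whether or not $\gamma$ crosses $\alpha$, in particular when $\alpha$ is a node, and they can go deep. An excursion winding $w$ times around $\alpha$ reaches the equidistant curve of length about $2/w$, and $w$ can be of order $n$; think of a curve spiralling $n$ times around a cusp of a pair of pants. So in the node case $\gamma$ does not stay outside the standard collar of the opened curve. Your compactness argument over cuff lengths in $[0,B_g]$ also does not reach these arcs, because the region they occupy depends on $n$. You need the argument of Lemma~\ref{lem:geomcollars}:
\begin{itemize}
\item such an excursion has about $w$ self-intersections, so $w\lesssim n$, and it stays where the equidistant curves have length $\gtrsim 1/n$;
\item on that region, write the collar metric as $\frac{du^2}{u^2-x^2}+u^2\,ds^2$, with $u$ the length of the equidistant curve and $x$ the core length;
\item the identity in $(u,s)$ between core lengths $\eps$ and $\frac{1}{5n}$ is then an isometry on the boundary and uniformly bi-Lipschitz, because $x/u$ stays bounded away from $1$.
\end{itemize}

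\textbf{Twist control.} The twist bound in Step~1 is misjustified. A uniform twist creates no self-intersections: a Dehn twist about $\alpha$ adds a full turn to every crossing strand without changing $i(\gamma,\gamma)$. For $k=1$, a single strand crosses the collar simply however often it winds. What $i(\gamma,\gamma)=n$ bounds is only the \emph{relative} winding of strands, since two strands whose windings differ by $r$ turns meet about $r$ times. The common twist has to be removed separately, in one of two ways:
\begin{itemize}
\item by Wolpert's twist formula at the minimizer, which is what the paper uses when there is a single strand;
\item by choosing the twist parameter of $\alpha$ in $Y_\gamma$ so that one strand has winding $0$, as the paper does before summing windings in Lemma~\ref{lem: typeIII}.
\end{itemize}
Minimality of the metric is lost after your first surgery, so only the second option works for every curve in your iteration. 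With it, the total winding is $O(n+k)$. The twisting then contributes $O(1)$ per crossing in the new collar, and your crossing-arc estimate goes through.
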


Using these thickened surfaces, we next show that the designer metrics can be regrouped to ``behave" like a fixed set of surfaces, which we call model metrics $\model$ (and which will be explained below). Now the counting result follows from a combination of counting the model metrics and counting curves on the model metrics. The main part of the proof is quantifying how the behavior of curves on the model metrics mimics the behavior on the designer metrics.

So what are these model metrics $\model$? For a fixed positive integer $n$, they are associated to topological types of pants decomposition as follows. Let $P=\{\alpha_1,\hdots,\alpha_{3g-3}\}$ be a pants decomposition. To each $\alpha_i$, $i \in \{1,\hdots, 3g-3\}$, we associate a 
$k_i=0,1,2,...,n$ and construct a base surface as follows: we set the length of $\alpha_i$ to be $\frac{k_i}{n}$, and the twists to be $0$. We use the convention that $k_i=0$ corresponds to $\alpha_i$ representing a cusp. Taking the set of all possible model metrics, that is by doing this construction with all possible topological types of pants decomposition, gives us a finite discrete set of surfaces in augmented moduli space, that is including noded hyperbolic metrics. We denote these metrics as $\model^n$ and we denote by $\model$ the set of all model metrics, that is $\model=\cup \model^n$. Note that implicitly, the set $\model^n$ is indexed by the set of topological types of pants decomposition. The sets of topological types of pants decomposition have previously been used to understand the geometry of moduli spaces \cite{Rafi-Tao, Cavendish-Parlier}. As the index $n$ increases, surfaces in $\model^n$ start to probe the thinner parts of moduli space.

We summarize the steps in the proof of Theorem \ref{theoremA} with the following diagram:
\begin{equation*}
 \{\text{Curves}\} \rightarrow \{ \text{Designer metrics} \}\rightarrow \{ \text{Thickened surfaces}\} \rightarrow \{\text{Model surfaces}\}
\end{equation*}
\begin{equation*} 
 \gamma \xmapsto{optimizing \, metric} X_{\gamma} 
 \xmapsto{thickened \, metric} Y_{\gamma} \xmapsto{model \, metric} Z^{P}_\gamma
\end{equation*}

\begin{remark} The following items give a brief condensed description of some of the ancillary results that arise in the investigation of the inf spectrum properties and in the proof of Theorem \ref{theoremA}. More details can be found in 
Sections \ref{sec:inf}, \ref{section:counting figure 8's}, and \ref{sec:propertiesofinfspec}.
\begin{itemize} 
 \item {\bf The constants.}
 The constant $A$ is the number of $\MCG_{g}$ orbits of punctured tori (there is only one) and embedded pairs of pants. See Proposition \ref{prop: infspecproperties}.
The constants $B$ and $C$ can be upper bounded using the final estimates in the proof of Theorem \ref{theoremA}. 

\item {\bf Multiplicity.} The first number  in  the inf spectrum, $0$,    has multiplicity 
$\floor{\frac{g}{2}}+1$, this is the number of $\MCG_g$ orbits of essential simple closed curves. Next is $2\arccosh (3)$ with multiplicity 
\begin{equation}
 2g-2+\floor*{\frac{(g-1)^2}{4}}.
 \end{equation}
This is the number of 
$\MCG_g$ orbits of figure $8$s on the surface. This multiplicity determines the genus of the surface. See Proposition \ref{enum: counting}.

\item {\bf Curve types and inf length.} While there is very little one can surmise about the curve type of a length that appears in the length spectrum, for the inf spectrum there are a number of interesting 
conclusions one can draw knowing the inf length. See Section \ref{sec:propertiesofinfspec}.

\item {\bf Length spectrum comparison.} Comparing the length spectrum of a hyperbolic metric $X$ on the closed surface $\Sigma$ to the inf spectrum of $\Sigma$, we see that the lower bound constant in the inf spectrum is larger since $A >1$. In fact, a more germane comparison is with respect to the spectrum of minimal $X$-length representatives in each mapping class orbit which is a strictly smaller set though has the same growth rate as the length spectrum \cite{BP}. 
\end{itemize}
\end{remark}

{\bf Organization.} In section \ref{sec:prelim} we introduce notation and preliminary results that we use throughout. In section \ref{sec:thick}, we show how to thicken a designer metric (only if needed!) and in section \ref{sec:model}, we introduce and study model metrics. In section \ref{sec:inf}, we prove Theorem \ref{theoremA}. In section \ref{section:counting figure 8's} we count the number of 
$\MCG_g$ orbits of embedded pairs of pants and figure $8$s. Section 
 \ref{sec:propertiesofinfspec} is devoted to studying properties of the inf spectrum, and in section \ref{sec:growth using intersection} we show that using known 
 relations between length and intersection does not give better bounds than our 
 Theorem \ref{theoremA}.

\section{Preliminaries} \label{sec:prelim}

 Unless otherwise stated all curves are primitive and unoriented.
In Table \ref{table:1} we put notation that appears throughout the paper.

\begin{table}[htbp]
	\centering
	\begin{tabular}{|p{0.55\textwidth}|c|c|}
		\hline \hline 
		{\bf Definition} & {\bf Section} &{\bf Notation} \\
		\hline
		\hline 
		Inf length& \ref{sec:intro} & $m_{\gamma}$ \\
		\hline
		Inf spectrum of $\M$ (or $\Sigma$)& \ref{sec:intro} & $\infspec(\M)$ \\
		\hline
		Moduli space& \ref{sec:intro}& $\mathcal{M}:=\mathcal{M}(\Sigma)$ \\
		\hline
		Set of model metrics & \ref{sec:intro} & $\model^n$ \\
		\hline
		$\varepsilon$-thick part of moduli space & \ref{sec:intro} & $\M_g^{\varepsilon}$ \\
		\hline
		Teichm\"uller Space&\ref{subsec:hypsurfaces} & $\T_{g}$ \\
		\hline
		Mapping class group of genus $g$ surface &\ref{subsec:hypsurfaces}& $\MCG_g$\\
		\hline 
		Length function of $\gamma$ on Teichm\"{u}ller space & \ref{subsec:hypsurfaces} & $\ell_{\gamma}$ \\
		\hline
		Systole length of $X$ & \ref{subsec:hypsurfaces} & $\text{sys}(X)$\\
		\hline
		Bers constant & \ref{subsec:collarbers} & $B_g$ \\
		\hline
		Geometric winding number & \ref{subsec:collarbers} & $\text{gw}_{\alpha}(\cdot)$ \\
		\hline
		Collar neighborhood about $\alpha$ with boundary curves of length one & \ref{sec:thick} & $\mathcal{C}_{\alpha}$\\
		\hline
		Half-collar, and truncated pair of pants & \ref{sec:thick} & $\mathcal{H}_x,\ P_x^1$ \\
		\hline
		Intersection number& \ref{sec:thick} & $i(\cdot ,\cdot)$\\
		\hline
		Optimal (Designer) metric for $\gamma$& \ref{sec:thick} & $X_{\gamma}$ \\
		\hline
		Thickened designer metric & \ref{sec:thick} & $Y_{\gamma}$\\
		\hline
		Model metric with associated FN parameter $\vec{k}$ & \ref{sec:model} & $Z^{P}_{\vec{k}}$\\
 \hline
		 number of trivalent graphs & \ref{sec:model} & $t_{g}$\\
 \hline
		\hline
	\end{tabular} 
	\caption{Notations} \label{table:1}
\end{table}

\subsection{Hyperbolic surfaces and their moduli}\label{subsec:hypsurfaces}

We work throughout with \emph{complete, finite-area, orientable hyperbolic surfaces}, i.e.\
Riemannian $2$-manifolds of constant curvature $-1$. Every such surface is conformally
equivalent to a quotient $\Gamma \backslash \Hyp$, where $\Hyp$ denotes the upper half-plane
equipped with the metric $ds^2 = (dx^2+dy^2)/y^2$, and $\Gamma \subset \PSL(2,\R)$ is a
discrete, torsion-free subgroup. We restrict attention to \emph{closed} surfaces of genus
$g \geq 2$, referring to surfaces with cusps or geodesic boundary only when explicitly needed.

\begin{definition}[Teichm\"uller space and moduli space]\label{def:teich}
Fix a smooth closed oriented surface $S$ of genus $g \geq 2$. The \emph{Teichm\"uller space}
$\T_g$ is the space of isotopy classes of hyperbolic metrics on $S$:
\[
	\T_g \;:=\; \bigl\{\text{hyperbolic metrics on }S\bigr\} \big/ {\sim},
\]
where two metrics are equivalent if they are related by an isometry isotopic to the identity.
A point $X \in \T_g$ is called a \emph{marked hyperbolic surface}.

The \emph{mapping class group} $\MCG_g := \MCG(S)$ is the group of isotopy classes of
orientation-preserving homeomorphisms $S\to S$. It acts on $\T_g$ by re-marking: for
$\phi\in\MCG_g$ and $X\in\T_g$, set $\phi\cdot X := X\circ\phi^{-1}$ (with the induced
metric). The quotient
\[
	\M_g \;:=\; \T_g / \MCG_g
\]
is the \emph{moduli space} of genus-$g$ hyperbolic surfaces; its points are isometry classes
of closed hyperbolic surfaces of genus $g$.
\end{definition}

\begin{definition}[Fenchel--Nielsen coordinates]\label{def:FN}
A \emph{pants decomposition} of $S$ is a maximal collection
$P = \{\gamma_1,\ldots,\gamma_{3g-3}\}$ of isotopy classes of pairwise disjoint, pairwise
non-isotopic simple closed curves. The complement
$S\setminus(\gamma_1\cup\cdots\cup\gamma_{3g-3})$ consists of $2(g-1)$ pairs of pants
(three-holed spheres).

Given a pants decomposition $P$, every marked surface $X\in\T_g$ is completely determined by
its \emph{Fenchel--Nielsen coordinates}
\[
	\bigl(\ell_{\gamma_1}(X),\ldots,\ell_{\gamma_{3g-3}}(X);\;
	\tau_{\gamma_1}(X),\ldots,\tau_{\gamma_{3g-3}}(X)\bigr)
	\;\in\; \R_{>0}^{3g-3}\times\R^{3g-3},
\]
where $\ell_{\gamma_i}(X)>0$ is the hyperbolic length of the geodesic representative of
$\gamma_i$ in $X$, and $\tau_{\gamma_i}(X)\in\R$ is the corresponding twist parameter.
The Fenchel--Nielsen map is a real-analytic diffeomorphism
$\T_g \xrightarrow{\;\sim\;}\R_{>0}^{3g-3}\times\R^{3g-3}$.
\end{definition}

\begin{definition}[Curve lengths and systole]\label{def:length}
Every free homotopy class of essential closed curves on $X$ contains a unique closed geodesic
representative. For a closed curve $\gamma$, we write $\ell_\gamma(X)$ for the hyperbolic
length of its geodesic representative on $X$. The function
$\ell_\gamma\colon\T_g\to\R_{>0}$ is real-analytic.

The \emph{systole} of $X$ is
\[
	\sys(X) \;:=\; \min_\gamma \ell_\gamma(X),
\]
the minimum taken over all (free homotopy classes of) essential closed curves. For
$\varepsilon>0$, the \emph{$\varepsilon$-thick part} of moduli space is
\[
	\M_g^\varepsilon \;:=\; \bigl\{X\in\M_g : \sys(X)\geq\varepsilon\bigr\}.
\]
\end{definition}

For $\gamma$ an essential closed curve on 
$\Sigma$, the $\text{inf}$ length of $\gamma$ is defined to be
$$
m_{\gamma}:=
\text{inf} \{\ell_{\gamma}(X): X \in \T_g \}.
$$
Curves in the same mapping class group orbit have equal inf lengths. This is the reason why we consider $m_{[\gamma]}$, where $[\gamma]$ is in the $\MCG_g$ orbit of $\gamma$.
Moreover, this infimum is achieved at a unique marked hyperbolic structure denoted $X_{\gamma}$ in moduli space. For basics on inf lengths and the inf spectrum 
see \cite{BM}.

\subsection{The collar lemma and Bers' theorem}\label{subsec:collarbers}

The collar lemma states that a simple closed geodesic of length $\ell$ on a hyperbolic surface has a collar of width (that is, the distance from any point of the collar to the geodesic is) 
$$
\arcsinh \left(\frac{1}{\sinh{\frac{\ell}{2}}}\right)
$$

In particular, simple geodesics of length at most one cannot intersect. 

\begin{theorem}\label{thm:Bers}
For each $g \geq 2$, there exists a constant $B_{g}$ so that any hyperbolic surface $X$ of genus $g$ has a pants decomposition whose pants curves have length less than $B_{g}$. Moreover, the decomposition can be chosen to include all curves of length at most $1$ on $X$. \end{theorem}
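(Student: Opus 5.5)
This statement is Bers' theorem. The argument is classical: an induction that greedily picks short simple closed geodesics, with area bounds that depend only on $g$. I would organise it as follows.

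\emph{Start from the short curves.} By the collar lemma, simple closed geodesics of length at most one are pairwise disjoint. There are at most $3g-3$ of them, and they form part of a pants decomposition. Call this collection $\Gamma_0$. If it is already maximal, we are done, since every curve has length at most $1 < B_g$.

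\emph{Inductive step.} Suppose we have a collection $\Gamma_k$ of disjoint simple closed geodesics, each of length at most $L_k$, and $\Gamma_k$ is not maximal. Cut $X$ along $\Gamma_k$. At least one component $Y$ is not a pair of pants. It is a hyperbolic surface with geodesic boundary, and its area is at most $2\pi(2g-2)$.

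We need to find an essential simple closed geodesic in the interior of $Y$ that is not boundary parallel and has length bounded in terms of $g$ and $L_k$. Take the equidistant neighbourhood of $\partial Y$ of radius $r$ and let $r$ grow until one of two things happens:
\begin{itemize}
\item the neighbourhood stops being an embedded union of annuli (two boundary collars meet, or one collar meets itself);
\item the area forces this to happen.
\end{itemize}
The area of the $r$-neighbourhood of a boundary curve of length $\ell$ is $\ell\sinh r$, which is bounded below once $\ell$ is bounded below. If some boundary component is very short, use the standard collar instead of this crude bound. Since the area of $Y$ is bounded, the first touching happens at some $r$ bounded in terms of $g$ and $L_k$.

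At the touching point we get a geodesic arc of length $2r$ joining boundary curves. Take the arc together with the boundary curves it meets, and consider the boundary of a regular neighbourhood of this union. This produces a simple closed curve of length at most $2L_k+4r$. It is essential and not boundary parallel, because $Y$ is not a pair of pants.

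If $Y$ has no boundary (so $\Gamma_k=\emptyset$), we start instead from an injectivity-radius argument. The area bound gives a point where the injectivity radius is at most a constant depending on $g$. That in turn gives a simple closed geodesic of length bounded in terms of $g$.

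Adding this new curve gives $\Gamma_{k+1}$ with $L_{k+1}=f_g(L_k)$ for an explicit increasing function $f_g$. After at most $3g-3$ steps we reach a full pants decomposition, and we may take $B_g=f_g^{(3g-3)}(1)$. It contains $\Gamma_0$ by construction.

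\emph{Main obstacle.} The delicate step is showing that the curve produced from the arc is non-peripheral in $Y$ and essential. One has to check the two cases separately: the arc joins two distinct boundary components, or it returns to the same component. In each case, use that $Y$ is not a pair of pants to rule out the resulting curve being homotopic to a boundary component or trivial. Some care is also needed when some boundary components are very short, since the area bound $\ell\sinh r$ then degenerates. I would handle this by working with the complement of the standard collars, whose boundary lengths are bounded below by a universal constant.
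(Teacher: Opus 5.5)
Your proposal is correct and is the classical inductive argument: grow collars of the curves already chosen until the area bound forces a self-touching, then build a new non-peripheral curve from the resulting orthogonal arc. This is the argument behind the paper's treatment, which gives no proof of its own and simply cites \cite{BuserBook}. Truncating along the standard collars of the short curves in $\Gamma_0$, whose boundary lengths are bounded above and below by universal constants, correctly keeps the bound independent of how short those curves are, and starting the induction from $\Gamma_0$ yields the ``moreover'' clause.
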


The constant $B_{g}$ is called the Bers constant and we call a pants decomposition that satisfies the above inequality a short pants decomposition. See \cite{BuserBook}.

The following is known as the Wolpert lemma in the literature \cite{BuserBook}. 

\begin{lemma}\label{lem:WolpertLemma}
Suppose $\phi : X \rightarrow Y$ is a K-quasiconformal homeomorphism between two hyperbolic surfaces. Then
$$
\frac{\ell_{\gamma}(X)}{K} \leq \ell_{\phi(\gamma)}(Y) \leq K \ell_{\gamma}(X)
$$
for any closed curve $\gamma$ on $X$. 
\end{lemma}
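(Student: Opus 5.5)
The plan is to prove the statement by passing to annular covers and using the quasi-invariance of the modulus of a ring domain (Grötzsch's inequality). Because $\phi^{-1}$ is also $K$-quasiconformal, applying one inequality to $\phi^{-1}$ gives the other, so it suffices to prove $\ell_{\phi(\gamma)}(Y)\leq K\,\ell_\gamma(X)$.

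First step: fix a closed geodesic $\gamma$ on $X$, say of length $\ell=\ell_\gamma(X)$. Write $X=\Gamma\backslash\Hyp$ and conjugate so that $\gamma$ corresponds to the hyperbolic element $z\mapsto e^{\ell}z$. The annular cover $\widetilde X_\gamma:=\langle z\mapsto e^\ell z\rangle\backslash\Hyp$ is then explicit. In the coordinate $\zeta=\log z$ it is the strip $\{0<\mathrm{Im}\,\zeta<\pi\}$ modulo the translation $\zeta\mapsto\zeta+\ell$. The map $w=\exp(2\pi i\zeta/\ell)$ identifies it conformally with the round annulus $\{1<|w|<e^{2\pi^2/\ell}\}$. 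Its modulus, normalized as $\frac{1}{2\pi}\log(R_2/R_1)$, is therefore $\pi/\ell$. The same computation on $Y$ shows that the annular cover $\widetilde Y_{\phi(\gamma)}$ has modulus $\pi/\ell_{\phi(\gamma)}(Y)$.

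Second step: lift $\phi$ to a homeomorphism $\tilde\phi:\widetilde X_\gamma\to\widetilde Y_{\phi(\gamma)}$. Since $\phi_*$ sends the cyclic subgroup $\langle\gamma\rangle\subset\pi_1(X)$ onto a conjugate of $\langle\phi(\gamma)\rangle\subset\pi_1(Y)$, the lifting criterion gives a lift of $\phi\circ p_X$ through $p_Y$. The analogous lift of $\phi^{-1}$ is an inverse, so $\tilde\phi$ is a homeomorphism. The covering projections are local isometries, in particular conformal. Quasiconformality is a local condition, so $\tilde\phi$ is again $K$-quasiconformal.

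Third step: apply Grötzsch's theorem. A $K$-quasiconformal homeomorphism between doubly connected domains (ring domains) satisfies $\frac1K\,\mathrm{mod}(A)\leq\mathrm{mod}(\tilde\phi(A))\leq K\,\mathrm{mod}(A)$. This gives
\[
\frac{\pi}{\ell_{\phi(\gamma)}(Y)}\;\geq\;\frac1K\cdot\frac{\pi}{\ell_\gamma(X)},
\]
which is exactly $\ell_{\phi(\gamma)}(Y)\leq K\ell_\gamma(X)$, as required.

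The step I expect to need the most care is the third one. Both annuli are non-degenerate open round annuli of finite modulus, so the quasi-invariance of modulus applies directly. This is cleanest via extremal length: the modulus is the reciprocal of the extremal length of the family of curves joining the two boundary components, and by the analytic definition of $K$-quasiconformality extremal length changes by at most a factor $K$. If one prefers to avoid ring domains with non-compact closure, exhaust each annulus by compact subannuli and pass to the limit. Finally, the case where $\gamma$ is not primitive follows by applying the argument to the primitive root and multiplying both sides by the multiplicity.
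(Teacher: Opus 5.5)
The paper gives no proof of its own and simply cites Buser's book, where the argument is essentially Wolpert's original one: pass to the annular cover, identify it with a round annulus of modulus $\pi/\ell$, and use that a $K$-quasiconformal map distorts moduli by at most a factor $K$. This is exactly what you do, and your proof is correct; one harmless slip is that, with your normalization $\frac{1}{2\pi}\log(R_2/R_1)$, the modulus equals the extremal length of the family of arcs joining the two boundary components (equivalently, the reciprocal of the extremal length of the family of core curves), not its reciprocal, which does not affect the conclusion since either extremal length changes by at most a factor $K$.
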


The following proposition has a much more general formulation but we state it here for a pair of pants since that is the setting for which we need it. This formulation can be proven using elementary hyperbolic geometric formulas involving pairs of pants, see \cite{BuserBook}. Fix $a,b \geq 0$, consider the pair of pants $P_x$ with cuff lengths $a$, $b$, and $x \geq 0$, and denote 
$P_x$ truncated along the equidistant curve of length 1 by $P_x^1$.

\begin{proposition} \label{prop: geometricconvergence}
The pants $P_x$ geometrically converges to 
 $P_0$, as $x$ goes to $0$. In particular, there is a $C_{x}$-biLipschitz mapping 
 from the truncated $P_x^1$ to the truncated $P_{0}^1$ which 
 \begin{enumerate}
 \item when restricted to the boundary curves is an isometry 
 \item respects the endpoints of a simple orthogeodesic going from one boundary curve to another. 
 \end{enumerate}
 The constant $C_{x}$ is monotone and goes to 1, as $x$ goes to $0$.
\end{proposition}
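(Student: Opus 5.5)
We propose to reduce the statement to an explicit, continuous family of right-angled hexagons and to build the map piecewise, equivariantly with respect to the reflection symmetry of the pants. Cut $P_x$ along its three seams, the simple orthogeodesics joining distinct cuffs. This gives two isometric right-angled hexagons $H_x$ whose alternate sides have lengths $a/2$, $b/2$, $x/2$. For $x=0$ the side of length $x/2$ degenerates to an ideal vertex, so $H_0$ is a right-angled pentagon with one ideal vertex. We interpret $P_x^1$ as $P_x$ with the region between the cuff of length $x$ and its equidistant curve of length $1$ removed (and likewise near $a$ or $b$ if either vanishes). Correspondingly, the truncated hexagon $H_x^1$ is bounded by six pieces: the cuff arcs of lengths $a/2$ and $b/2$, an arc of length $1/2$ of the equidistant curve (a horocyclic arc when $x=0$), and three seam segments.

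First we check that all side lengths of $H_x^1$ depend continuously on $x\in[0,x_0]$. The lengths of the full seams are given by the hexagon formula $\cosh c = (\cosh\alpha\cosh\beta+\cosh\gamma)/(\sinh\alpha\sinh\beta)$ and its pentagon limit. The lengths of the two truncated seams adjacent to the $x$-cuff are obtained by subtracting the collar distance. That distance is determined by $\frac{x}{2}\cosh d = \frac12$, and after subtraction the truncated lengths converge to the corresponding lengths in $P_0^1$. Next we normalize in $\Hyp$ so that the cuff side of length $a/2$ and its two perpendiculars are fixed. All vertices of $H_x^1$ then move continuously in $x$, and the boundary curves (geodesic segments and equidistant/horocyclic arcs) converge in $C^1$ to those of $H_0^1$.

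We then define $f_x\colon H_x^1\to H_0^1$. On the cuff arcs and on the equidistant arc it is the isometry by arclength, since the lengths agree exactly. On each seam segment it is affine in arclength, with slope tending to $1$. We extend $f_x$ to the interior by a diffeomorphism depending continuously on $x$ in $C^1$ and equal to the identity at $x=0$. For instance, use Fermi coordinates based on the $a/2$ side, in which both domains are given by inequalities with continuously varying $C^1$ boundary functions, and rescale fibrewise. Then the Lipschitz constants of $f_x$ and $f_x^{-1}$ are continuous in $x$ and equal $1$ at $x=0$. We obtain $C_x$ as their maximum, and we make it monotone by replacing it with $\sup_{y\le x}C_y$.

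Because $f_x$ maps seams to seams compatibly on the two hexagons, the maps glue across the seams to a map $P_x^1\to P_0^1$. The glued map is still $C_x$-biLipschitz for the path metric, and it is an isometry on the boundary curves. Endpoints of the seams are preserved by construction. The remaining simple orthogeodesics are those from a cuff to itself; each crosses the hexagons perpendicularly to a seam. To respect their endpoints as well, we subdivide each hexagon along these arcs first and run the same construction on the resulting pentagonal pieces, whose side lengths again vary continuously.

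We expect the main obstacle to be making the interior extension genuinely uniform, so that $C_x\to1$, near the truncation curve. There the domain is thin in one direction, and degeneration of the cusp could spoil the estimates. However, the truncation at the length-$1$ equidistant curve keeps the domains uniformly away from the cusp, and the width estimates from the collar lemma give uniform $C^1$ control there. We would verify this convergence explicitly in upper half-plane coordinates centered at the ideal vertex of $H_0$.
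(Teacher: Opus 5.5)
Your framework (seams, hexagons, continuity of the truncated side lengths, prescribing the map on the boundary, gluing across seams, and making the constant monotone via $\sup_{y\le x}C_y$) is the natural way to flesh out what the paper only asserts by pointing to elementary pants formulas in Buser. The interior extension, however, is a genuine gap, and it is exactly where (1), (2) and $C_x\to 1$ live.

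Write $S_a,S_b$ for the cuff sides of $H_x^1$, $E$ for the truncation arc and $s_{ab},s_{ax},s_{bx}$ for the seams. In Fermi coordinates $(t,r)$ about $S_a$, your map $(t,r)\mapsto(t,rR_0(t)/R_x(t))$ is the identity on $S_a$ and affine on the fibres $s_{ab},s_{ax}$. But it sends the point of $S_b\cup s_{bx}\cup E$ lying over $t$ to the point of $\partial H_0^1$ over the same $t$. These three arcs meet at right angles, so $R_x$ is only piecewise $C^1$, and its kinks move with $x$. For the corner $S_b\cap s_{bx}$, the trirectangle with sides $t_1(x)$ on $S_a$, $s_{ab}$ and $S_b$ gives $\tanh t_1(x)=\tanh(b/2)/\cosh s_{ab}(x)$, and $s_{ab}(x)$ is strictly increasing.

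Consequently your map carries part of the $b$-cuff into $s_{bx}$ (or conversely), and it is not arclength on $S_b$ or $E$. At points with $t$ between $t_1(x)$ and $t_1(0)$ and $r$ close to $R_x(t)$, it contains the shear $r\,\partial_t(R_0/R_x)\approx R_0'-R_x'$. This shear has definite size because the two branches are orthogonal, so the Lipschitz constant does not even tend to $1$. The asserted $C^1$-continuous extension of $f_x$ is therefore precisely what remains to be proved. The real difficulty is not thinness near the truncation curve, since the truncated domains have uniformly bounded geometry. It is matching a prescribed boundary parametrization whose corners move.

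One repair works in two steps.
\begin{enumerate}
\item Build some side- and corner-preserving $G_x\colon H_x^1\to H_0^1$ converging to the identity in $C^1$. For instance, cut the Fermi domain along the fibres through the corner feet, reparametrize $t$ affinely on each piece, and rescale fibrewise there.
\item Note that $f_x\circ G_x^{-1}$ is a self-map of $\partial H_0^1$ fixing the six corners and $C^1$-close to the identity on each side. Extend it inward by interpolation in a collar of $\partial H_0^1$, using product charts at the right-angled corners.
\end{enumerate}

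Separately, drop your last step about orthogeodesics from a cuff to itself. The statement only concerns orthogeodesics between distinct boundary curves, i.e.\ the seams, which is what the surgery in Lemma~\ref{lem:strict} needs to preserve Fenchel--Nielsen positions. The extra requirement is also incompatible with (1). Cut $H_x$ along the common perpendicular, of length $h_x$, from $S_a$ to the opposite seam $s_{bx}$. This yields two right-angled pentagons with $\cosh(b/2)=\sinh u_x\sinh h_x$ and $\cosh(x/2)=\sinh(a/2-u_x)\sinh h_x$. Here $u_x$ is the distance along the $a$-cuff from the foot of $s_{ab}$ to a foot of the $a$-to-$a$ orthogeodesic. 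Thus $\sinh u_x/\sinh(a/2-u_x)=\cosh(b/2)/\cosh(x/2)$, so $u_x\neq u_0$ for $x>0$. An isometry of the $a$-cuff fixing the feet of $s_{ab}$ and $s_{ax}$ is the identity or a reflection, and both preserve the pair $\{\pm u_x\}$. Hence no map satisfying (1) and (2) can send these endpoints to the corresponding ones in $P_0^1$.
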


The next Lemma and Proposition follow from the work of Bishop, see \cite{Bishop2002}.

\begin{lemma}\label{lem:bishop}
If two pairs of pants have an upper bound on their pants curves, and an upper bound on the ratios of lengths between their boundary curves, then there is a quasiconformal mapping between them with a constant that only depends on these two bounds. Moreover, the quasiconformal homeomorphism can be chosen so that it is affine on the boundary geodesics. 
\end{lemma}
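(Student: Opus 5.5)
We would prove Lemma \ref{lem:bishop} by an explicit decomposition of each pair of pants into two right-angled hexagons, building a quasiconformal map hexagon by hexagon and then gluing. Let $P$ and $P'$ be pants with cuff lengths $(a_1,a_2,a_3)$ and $(a'_1,a'_2,a'_3)$, all at most some $B$, with $a_i/a'_i$ and $a'_i/a_i$ at most some $R$; cusps (zero length) are only allowed to correspond to cusps, which is forced by the ratio bound. The three simple orthogeodesics between distinct cuffs cut $P$ into two isometric right-angled hexagons $H,\bar H$ with alternate sides $a_1/2,a_2/2,a_3/2$. Do the same for $P'$. Any map $H\to H'$ sending sides to corresponding sides, and conjugated by the reflection symmetry to a map $\bar H\to\bar H'$, glues to a homeomorphism $P\to P'$, since the two halves agree on the seams by construction. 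So it suffices to build $f:H\to H'$ that is $K$-quasiconformal with $K=K(B,R)$ and affine (constant speed) on the three cuff sides.

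For the hexagon map we would first handle the compact case. The space of right-angled hexagons with alternate sides in $[\delta,B/2]$ is compact, and hexagons depend continuously on these side lengths. Each hexagon can be triangulated canonically into geodesic triangles, for instance by coning from the intersection point of the three common perpendiculars, which is well defined and depends continuously on the hexagon. A piecewise map that is affine in a fixed chart on each triangle is then bi-Lipschitz, with a constant that is continuous in the pair of hexagons. Compactness then gives a uniform constant, and bi-Lipschitz implies quasiconformal. The cuff sides can be parametrized at constant speed, since the subdivision points on them are chosen proportionally.

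The main obstacle is the degenerate regime, when some cuff lengths tend to $0$ and the hexagon degenerates. Then the orthogeodesics become long, of length about $2\log(1/a_i)$, and no bi-Lipschitz bound can be uniform. This is where the ratio bound $R$ enters. Near a short cuff $a_i$ we would cut off the standard collar, bounded by the equidistant curve of length $1$ as in Proposition \ref{prop: geometricconvergence}, and treat the collar region separately.

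In Fermi coordinates the two half-collars are the annuli $\{0\le \rho\le w(a_i)\}$ and $\{0\le\rho\le w(a'_i)\}$. Conformally, each is a round annulus of modulus about $\pi/a_i$ (resp. $\pi/a'_i$), so a radial stretch between them is $K$-quasiconformal with $K$ roughly $\max(a_i/a'_i,a'_i/a_i)\le R$. This stretch is affine on the geodesic and an isometry, up to a uniformly controlled reparametrization, on the length-one boundary curve. The complement, the truncated pants $P^1$, lies in a compact family by Proposition \ref{prop: geometricconvergence} (it converges to $P_0^1$ as the cuff length shrinks). So the compactness argument of the previous paragraph applies there, with the map chosen to match the collar maps on the length-one curves, respecting orthogeodesic endpoints as in that proposition. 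Gluing the pieces gives the desired $K(B,R)$-quasiconformal map, affine on the boundary geodesics; this is essentially Bishop's construction in \cite{Bishop2002}.
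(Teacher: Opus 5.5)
Your argument is correct in outline, but it takes a genuinely different route from the paper. The paper does not prove this lemma at all: it imports it from Bishop \cite{Bishop2002}, whose explicit construction gives maps that are affine on the cuffs with dilatation of the form $1+C(B)\max_i|\log(a_i/a_i')|$. You instead give a self-contained soft argument. You cut along the seams into hexagons, treat short cuffs by an explicit stretch of the half-collars in conformal coordinates, and obtain a uniform bi-Lipschitz constant on the remaining truncated pants by compactness. This avoids Bishop's delicate explicit estimates. It also shows exactly where the ratio bound enters, namely in matching half-collars whose moduli are of order $1/a_i$ and $1/a_i'$. What it gives up is an explicit value of $K(B,R)$ and the property $K\to 1$ as $R\to 1$. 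The paper needs neither, since $Q_g$ in Lemma \ref{lem: model metric properties} only enters as some constant depending on $g$.

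Three points should be tightened.

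\emph{Constant speed on the cuffs.} A map that is affine in a Klein chart on each cone triangle does not have constant hyperbolic speed along a cuff side. Your coning triangulation also puts no subdivision points on the cuff sides, so ``chosen proportionally'' does not give affinity. Instead, parametrize each cone triangle by the geodesics from the cone point to the arclength-parametrized base side, or post-compose with a bi-Lipschitz correction supported near that side. Either choice still varies continuously with the hexagon, so compactness still applies.

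\emph{The thin/thick split.} Make it simultaneous: fix $\epsilon$ with $R\epsilon\le\frac12$ and truncate cuff $i$ in both pants exactly when $\min(a_i,a_i')<\epsilon$. Then both lengths are at most $\frac12$, and the length-one equidistant curves lie inside the embedded collars. Use the conformal coordinates $(u,t)\in[0,\arccos a_i]\times\R/a_i\Z$, in which the modulus is $\arccos(a_i)/a_i\approx\pi/(2a_i)$. Set $\lambda=a_i'/a_i$ and $\mu=\arccos(a_i')/\arccos(a_i)\in(\frac23,\frac32)$. The map $(u,t)\mapsto(\mu u,\lambda t)$ then has dilatation at most $\frac{3R}{2}$ and constant speed on the geodesic. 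It is an exact isometry on the length-one curve and sends the radial seam segments to seam segments.

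\emph{Compactness on the truncated pants.} This step needs a version of Proposition \ref{prop: geometricconvergence} that is uniform in all three cuff lengths: each lies in $[0,\frac12]$ if truncated and in $[\epsilon,B]$ otherwise. The map must be isometric on the length-one curves and of constant speed on the untruncated cuffs. The proposition as stated lets only one cuff vary, but continuity of the truncated hexagons in these parameters supplies the uniform version.
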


\begin{proposition}\label{prop:bishop}
Suppose $X$ and $Y$ are hyperbolic surfaces and there is a pants decomposition for which both $X$ and $Y$ have upper bounded pants curves and upper bounded ratios of pants lengths, then 
there is a quasiconformal homeomorphism between $X$ and $Y$ with a constant that just depends on these two bounds. 
\end{proposition}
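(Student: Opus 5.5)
The plan is to build the homeomorphism one pair of pants at a time using Lemma \ref{lem:bishop}, and then repair the mismatches along the pants curves with explicit maps supported in collars. Write $P=\{\gamma_1,\ldots,\gamma_{3g-3}\}$ for the common pants decomposition. By hypothesis there are constants $B$ and $R$ with $\ell_{\gamma_i}(X),\ell_{\gamma_i}(Y)\le B$ and $R^{-1}\le \ell_{\gamma_i}(X)/\ell_{\gamma_i}(Y)\le R$ for all $i$. The homeomorphism $X\to Y$ is not required to respect the markings. We may therefore replace $Y$ by its image under a product of Dehn twists along the $\gamma_i$. This changes neither the lengths nor the pants decomposition, and it lets us assume the twist parameters satisfy $|\tau_{\gamma_i}(X)-\tau_{\gamma_i}(Y)|\le \max(\ell_{\gamma_i}(X),\ell_{\gamma_i}(Y))$.

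\textbf{Step 1: the pants-wise maps.} For each pair of pants $Q$ of $X\setminus P$, with corresponding pants $Q'$ of $Y$, Lemma \ref{lem:bishop} gives a $K_0$-quasiconformal map $f_Q\colon Q\to Q'$ that is affine on the boundary geodesics, with $K_0=K_0(B,R)$. We are free to precompose $f_Q$ with an isometry of $Q$ and postcompose with one of $Q'$. We use this freedom, together with the fixed seams (simple orthogeodesics between boundary components), to normalize $f_Q$ so that on each boundary curve it sends the seam endpoints of $Q$ to those of $Q'$. Along each $\gamma_i$, the two maps coming from the two adjacent pants are then both affine maps of circles of the same lengths. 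They therefore differ by a rotation, which is exactly the twist discrepancy between the two gluings.

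\textbf{Step 2: correcting the twist.} Inside $X$, choose a half-collar of $\gamma_i$ on one side, of width $w_i\ge w(B)=\arcsinh(1/\sinh(B/2))>0$ by the collar lemma. On this half-collar, replace the map by its composition with a shear: in Fermi coordinates $(t,s)$, with $t$ the distance to $\gamma_i$ and $s$ the arc-length parameter, the shear is $(t,s)\mapsto(t,s+\theta(t))$. Here $\theta$ interpolates linearly from the required rotation amount $\delta_i$ (with $|\delta_i|\le R\,\ell_{\gamma_i}$ after the reduction) at $t=0$ down to $0$ at $t=w_i$. Performing the interpolation in this collar modifies the earlier $f_Q$ there by a controlled factor. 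The dilatation of the shear is governed by $|\theta'(t)|\cosh(t)$ measured in the natural metric $\ell_{\gamma_i}\cosh t\,ds$. This is at most $|\delta_i|\ell_{\gamma_i}^{-1}\cdot C/w_i$ in normalized units, which is bounded in terms of $R$ and $w(B)$. So the shear is $K_1(B,R)$-quasiconformal.

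\textbf{Step 3: conclusion.} The resulting map is a homeomorphism, since the pieces agree on every $\gamma_i$. Quasiconformality is local and the $\gamma_i$ are analytic curves, hence removable, so the glued map is $K_0K_1$-quasiconformal. The constant depends only on $B$ and $R$.

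The main obstacle is Step 2. We need a uniform dilatation bound on the twist correction that holds both when $\gamma_i$ is very short and when it has length near $B$. I would handle this by measuring the shear relative to $\ell_{\gamma_i}$, using the Dehn-twist reduction to bound $|\delta_i|/\ell_{\gamma_i}$ by $R$, and using the collar width $w_i\ge w(B)$. For short curves the collar width grows like $\log(1/\ell_{\gamma_i})$, which only helps.
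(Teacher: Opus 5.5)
The paper does not prove this proposition; it attributes both Lemma~\ref{lem:bishop} and this proposition to Bishop's work. You take a genuinely different, self-contained route. You use only the pants-level lemma, glue the pants maps, and absorb the mismatch along each $\gamma_i$ into a shear supported in a half-collar. This is essentially the mechanism the paper uses informally in Lemma~\ref{lem: model metric properties}, item~(\ref{item: qc constant}), where twists are corrected separately by twist maps whose dilatation depends only on the length bound. Your version has two advantages. It explains why no hypothesis on twist parameters is needed once the homeomorphism need not respect markings. It also shows where uniformity in $\ell_{\gamma_i}$ comes from. The construction works, but two points need fixing.

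First, the normalization in Step 1 cannot be achieved with isometries. An isometry of a pair of pants preserving each boundary component fixes each seam pointwise, so it is the identity or the reflection in the seams. Moreover, Lemma~\ref{lem:bishop} as stated says nothing about seams. The normalization is also unnecessary:
\begin{itemize}
\item Composing with the seam reflection of $Q'$ if needed, each $f_Q$ preserves orientation.
\item So on $\gamma_i$ the two restrictions are orientation-compatible affine maps $\gamma_i(X)\to\gamma_i(Y)$ with the same scale factor, and they differ by a rotation of $\gamma_i(X)$.
\item A rotation is only defined modulo $\ell_{\gamma_i}(X)$, so you can simply take $|\delta_i|\le \ell_{\gamma_i}(X)/2$.
\end{itemize}
You can then drop the Dehn-twist reduction. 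You can also drop the comparison of $\tau_{\gamma_i}(X)$ with $\tau_{\gamma_i}(Y)$, which is not the relevant quantity anyway, since the boundary maps rescale by $\ell_{\gamma_i}(Y)/\ell_{\gamma_i}(X)$.

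Second, make the constant $C$ in Step 2 explicit. It is
\[
\sup_{0\le t\le w_i}\ell_{\gamma_i}\cosh t=\ell_{\gamma_i}\coth(\ell_{\gamma_i}/2)\le B\coth(B/2),
\]
the boundary length of the standard collar; the inequality holds because this function is increasing in $\ell_{\gamma_i}$. This gives
\[
|\theta'(t)|\cosh t\;\le\;\frac{\ell_{\gamma_i}\coth(\ell_{\gamma_i}/2)}{2\,w_i}\;\le\;\frac{B\coth(B/2)}{2\,w(B)}.
\]
This is the real source of uniformity, because $\cosh t$ by itself is of order $1/\ell_{\gamma_i}$ at the edge of the collar when $\gamma_i$ is short. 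With these changes, the rest of your argument goes through as written, including the disjointness of the half-collars and the removability of the analytic curves $\gamma_i$.
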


\begin{remark}\label{rem:embedding}
While we focus on the moduli space $\M_g$ of closed surfaces of genus $g$, the moduli space of complete finite area of surfaces of finite-type can be embedded in the closure $\overline{\M_g}$ for a well chosen $g$. Specifically, the moduli space of a genus $h$ surface with $m$ cusps can be embedded in $\overline{\M_g}$ where $g=h +\lceil\frac{m}{2}\rceil$. It follows that our upper bounds on the growth of the inf spectrum for $\M_{h,m}$ are direct consequences of our upper bounds for $\M_g$. 
\end{remark}


\section{Thickening a surface}\label{sec:thick}
We suspect that any designer metric for a curve with at $n$ self-intersections has systole at least on the order $\frac{1}{n}$, but the only inequality that is somewhat immediate is a lower bound on the order of $\frac{1}{n^2}$. Nevertheless, the method we use here is somewhat general, and allows us to modify any surface with systole less than $\frac{1}{5n}$ to obtain a surface of systole at least $\frac{1}{5n}$ while controlling the length of certain curves. 

When the designer hyperbolic surface associated to a curve $\gamma$ has a small systole, we can find a different surface with a bigger systole (thickened surface) such that the length of the curve in this surface is comparable to its inf length up to a multiplicative constant.

Let $X_\gamma$ be the inf metric for $\gamma$ and let $\sys(X_\gamma)$ denote the systole of $X_\gamma$. If $\sys(X_\gamma) \geq \frac{1}{5n}$, we leave the surface as it is; but otherwise, we modify the surface to obtain a surface $Y_\gamma$ which has a systole of length at least $\frac{1}{5n}$ and with very similar properties to $X_\gamma$. We describe the process in what follows.

Consider all curves $\alpha$ of length $\leq \frac{1}{5n}$ on $X_\gamma$. By the collar lemma, they are disjoint, and we consider a short pants decomposition to which they belong, as in Theorem \ref{thm:Bers}. We now define the surface $Y_\gamma$ as the surface obtained by replacing all curves $\alpha$ with curves of length $\frac{1}{n}$. Specifically, this means not changing any of the other Fenchel-Nielsen parameters while making the length of each such $\alpha$ equal to $\frac{1}{n}$ and choosing the twist parameter to be $0$ (but this is not important). 

We now proceed to show that the resulting surface has nice properties, namely that the length of $\gamma$ on $Y_\gamma$ is (at worst) the length of $\gamma$ on $X_\gamma$ times a constant that only depends on the topology of the surface. This constant will be shown to depend only on topology.

Note that if there are multiple curves $\alpha$ that require thickening, we perform the procedure one curve after another, so implicitly we decide on an arbitrary order of the set of these short curves and the following procedure is done for one curve $\alpha$ at a time. 

We consider a short curve $\alpha$, a short pants decomposition and a collar $\collar_{\alpha}$ bounded by two equidistant curves of length exactly $1$. Let $P$ and $Q$ be the two pairs of pants on which $\alpha$ lives. (If $\alpha$ lives inside a one-holed torus and is thus the boundary of one pair of pants and not two, we proceed in the exact same way.) 

We now break up $\gamma$ on the union $P \cup Q$ into arcs which lie in the different subsurfaces, and are one of the following types (see Figure \ref{fig:arc types}):
\begin{enumerate}[label=\Roman*.]
 
 \item Arcs in $\collar_{\alpha}$ which go from one boundary curve back to the same boundary curve. These do not intersect $\alpha$ and are not necessarily simple. 
 \item Arcs that lie on $P\setminus \collar_{\alpha}$ or $Q\setminus \collar_{\alpha}$. 
 \item Arcs in $\collar_{\alpha}$ which go from one boundary curve to the other. These are all simple arcs, and all intersect $\alpha$. They may have pairwise intersection. 
\end{enumerate}

\begin{figure}[h]
 \centering
 \vspace{-10pt}
 \begin{overpic}[width=0.75\linewidth]{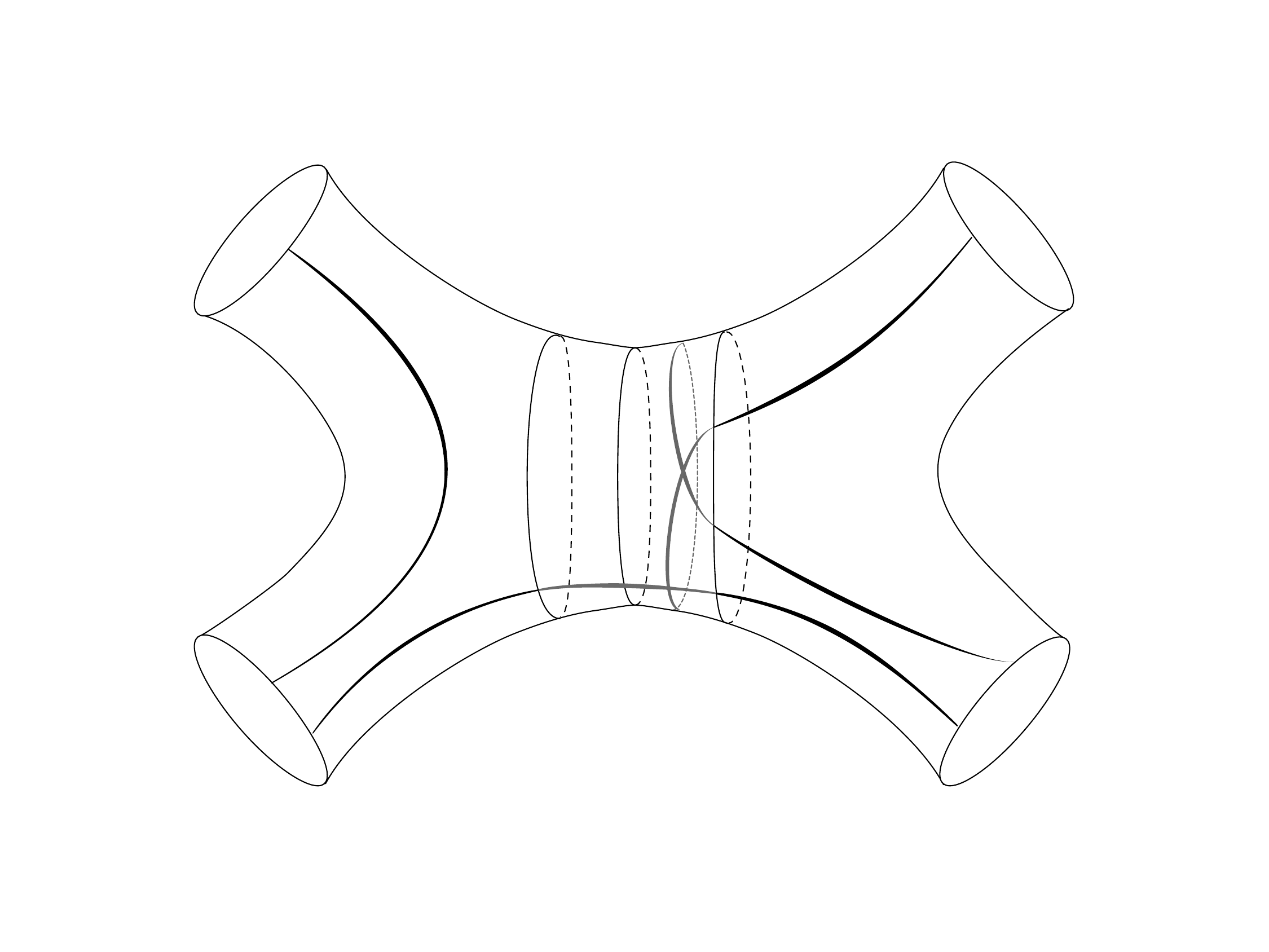}
 \put(53,41){I}
 \put(28,50){II}
 \put(75,50){II}
 \put(46,30){III}
 \end{overpic}
 \vspace{-10pt}
 \caption{Arcs of Type I, II and III}
 \label{fig:arc types}
\end{figure}

We rebuild $\gamma$ on $Y_\gamma$ by finding the corresponding arcs. The only subtlety here is on the choice of endpoints of the arcs. Along the boundary curves of $P$ or $Q$ the choice is canonical (we haven't changed the Fenchel-Nielsen parameter). For endpoints on the boundary of $\collar_{\alpha}$, we make the choice you might have guessed: we take orthogeodesics between these boundary curves and the boundary curves of $P$ or $Q$ to define starting points and then parameterize by length. 

The lengths of the arcs of the three types behave well. Arcs of type I and type II are controlled via a geometric convergence phenomena (Lemmas \ref{lem:geomcollars} and \ref{lem:geompants}), whereas arcs of type III are in fact strictly decreased. 
Arcs of type I and II are in a specific pair of pants, hence for Lemmas \ref{lem:geomcollars} and \ref{lem:geompants} we work in a pair of pants.

 {\bf Notation:} 
 Fix $a,b >0$ and let $P_x$ be a pair of pants with cuffs of length $a$, $b$ and a cuff $\alpha$ of length $x$ where $0 \leq x \leq 1$. 
 Denote by $\mathcal{H}_x \subset P_{x}$ the half-collar bounded by an equidistant curve of length $1$ and the geodesic of length $x$, $0 \leq x \leq 1$. 
 Next for $x<t \leq 1$, denote the equidistant neighborhood of boundary length $t$ deleted from $\mathcal{H}_x$ by $\mathcal{H}_{x}^t \subset \mathcal{H}_x$. Finally, denote by $P_{x}^{1}$ the pair of pants $P_x$ with $\mathcal{H}_x$ deleted; that is,
 $P_{x}^{1}$ is the pair of pants with the collar neighborhood about $\alpha$ of boundary length 1 deleted.

\begin{lemma}(Type I) \label{lem:geomcollars}
The hyperbolic cylinders $\mathcal{H}_{x}^{t}$ and $\mathcal{H}_0^t$ are $C_x$-biLipschitz equivalent by a mapping which when restricted to either boundary curve is an isometry. 
If $0 \leq x,y <t \leq 1$, then the total length of the arcs of type I in $\mathcal{H}_{x}^{t}$ is not increased by more than a multiplicative factor of $C_1^2$
by a biLipschitz mapping $\phi : \mathcal{H}_{x}^{t} \rightarrow \mathcal{H}_{y}^{t}$ which when restricted to the boundary 
of $\mathcal{H}_{x}^{t}$ is an isometry.

\end{lemma}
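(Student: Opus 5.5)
Since $\mathcal{H}_x^t$ is an annulus bounded by the equidistant curves of length $t$ and $1$ around the cuff of length $x$, I will first write it in explicit coordinates. I will then build the biLipschitz map to the cusp model $\mathcal{H}_0^t$ by hand, and obtain the second statement by composing through the cusp.

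\textbf{Coordinates.} In Fermi coordinates $(\rho,\theta)$ around the geodesic of length $x>0$, with $\theta\in\R/\Z$, the metric is
\[
d\rho^2+x^2\cosh^2\rho\, d\theta^2 .
\]
The equidistant curve at distance $\rho$ has length $x\cosh\rho$. So $\mathcal{H}_x^t$ is the region $\rho\in[\rho_t,\rho_1]$ with $x\cosh\rho_t=t$ and $x\cosh\rho_1=1$. It is convenient to reparametrize by the boundary length $s=x\cosh\rho\in[t,1]$. Then $d\rho=ds/\sqrt{s^2-x^2}$ and the metric becomes
\[
\frac{ds^2}{s^2-x^2}+s^2\,d\theta^2 .
\]
For the cusp $x=0$ (horocyclic coordinates) the same formula holds: $ds^2/s^2+s^2d\theta^2$.

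\textbf{The map.} Define $\Phi_x:\mathcal{H}_x^t\to\mathcal{H}_0^t$ by $(s,\theta)\mapsto(s,\theta)$, with the angular origin fixed by the orthogeodesic convention used in the construction. Since $s$ is preserved, $\Phi_x$ is an isometry on both boundary curves $s=t$ and $s=1$, and angular directions are preserved isometrically. In the radial direction the stretch factor is
\[
\sqrt{\frac{s^2-x^2}{s^2}}=\sqrt{1-\frac{x^2}{s^2}}\ \ge\ \sqrt{1-\frac{x^2}{t^2}} .
\]
Since the metric is diagonal in both models, $\Phi_x$ is $C$-biLipschitz with
\[
C=\Bigl(1-\frac{x^2}{t^2}\Bigr)^{-1/2}.
\]
This tends to $1$ as $x\to0$ and is monotone in $x$, so it is consistent with the constant $C_x$ of Proposition~\ref{prop: geometricconvergence} (one may take $C_x$ to be the maximum of the two). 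This proves the first statement.

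\textbf{The main obstacle: uniformity.} The bound above degenerates as $x\to t$, whereas the lemma asks for the uniform constant $C_1$ for all $0\le x,y<t\le1$. I would handle this by choosing $C_1$ as the monotone constant at the endpoint $x=1$ from Proposition~\ref{prop: geometricconvergence}, and checking that the map provided there restricts to the cylinder region with the boundary-isometry property. If it does not, I would replace the radial identification $s\mapsto s$ by an affine map in $\rho$ between $[\rho_t,\rho_1]$ and the corresponding cusp interval, then bound the resulting stretch uniformly using $t\le1$. I expect this case analysis to be the delicate step.

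\textbf{The composed map and type I arcs.} Set $\phi=\Phi_y^{-1}\circ\Phi_x:\mathcal{H}_x^t\to\mathcal{H}_y^t$. It is $C_xC_y\le C_1^2$ biLipschitz by monotonicity, and it is an isometry on the boundary. Take a type I arc $c$ with endpoints on a boundary curve. Its image $\phi(c)$ has the corresponding endpoints and length at most $C_1^2\,\ell(c)$. The geodesic arc in $\mathcal{H}_y^t$ homotopic to $\phi(c)$ relative to the boundary is no longer than $\phi(c)$. Summing over all type I arcs gives the claimed bound on total length.
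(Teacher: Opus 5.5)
Your $(s,\theta)$ computation is correct. For the first assertion it is a more concrete route than the paper's, which only appeals to geometric convergence of $\mathcal{H}_x$ to $\mathcal{H}_0$ on compact truncations: the identity map in $(s,\theta)$ is an isometry on both boundary curves and is $(1-x^2/t^2)^{-1/2}$-biLipschitz.

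The gap is in the second assertion. You conclude that $\phi=\Phi_y^{-1}\circ\Phi_x$ is $C_xC_y\le C_1^2$-biLipschitz ``by monotonicity''. But your own $C_x=(1-x^2/t^2)^{-1/2}$ is defined only for $x<t\le 1$ and tends to $\infty$ as $x\to t$, so there is no finite $C_1$ to bound by. Neither of your proposed repairs supplies one.

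Restricting the map of Proposition~\ref{prop: geometricconvergence} cannot work. That map is defined on $P_x^1=P_x\setminus\mathcal{H}_x$, which meets $\mathcal{H}_x^t$ only along the length-one curve, so there is nothing to restrict.

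The affine-in-$\rho$ alternative cannot give a bound uniform over $0\le x<t\le 1$, and in fact no map can. A biLipschitz map that is an isometry on the boundary sends the curve of length $t$ to the curve of length $t$. Its constant is therefore at least the ratio of the distances between the two boundary components:
\[
\frac{\arccosh(1/x)-\arccosh(t/x)}{\log(1/t)}\;\longrightarrow\;\frac{\arccosh(1/t)}{\log(1/t)}\qquad (x\to t).
\]
The right-hand side is unbounded as $t\to 1$. So the ``delicate case analysis'' you anticipate cannot close in the form you set it up.

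What can be proved, and what Lemma~\ref{lem:strict} actually uses, is uniformity in terms of $y/t$. Your map already gives this once you notice that $ds^2/s^2\le ds^2/(s^2-x^2)$ pointwise. Then $\Phi_x$ is $1$-Lipschitz, so $\phi$ increases lengths by at most $(1-y^2/t^2)^{-1/2}$, whatever $x<t$ is. This is a fixed constant as soon as $y/t$ is bounded away from $1$.

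Here the ingredient your proposal leaves out becomes essential. The paper's proof uses the self-intersection bound to confine type I arcs to $\mathcal{H}^t$ with $t=t(n)$. Since the target is $y=1/n$, one must check that this depth estimate produces $t(n)$ with $n\,t(n)$ bounded below by a constant larger than $1$. The relevant fact is that an arc which returns after reaching the equidistant curve of length $s$ has on the order of $1/s$ self-intersections.

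The paper's own proof is the same composition through the cusp, and it disposes of uniformity with the same appeal to monotonicity and $C_1$. So you have located the weak step correctly, but your proposal only flags it rather than resolving it.
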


\begin{figure}[h]
 \centering
 \vspace{-20pt}
 \begin{overpic}[trim={1cm 1cm 10cm 5cm}, clip, width=0.6\linewidth]{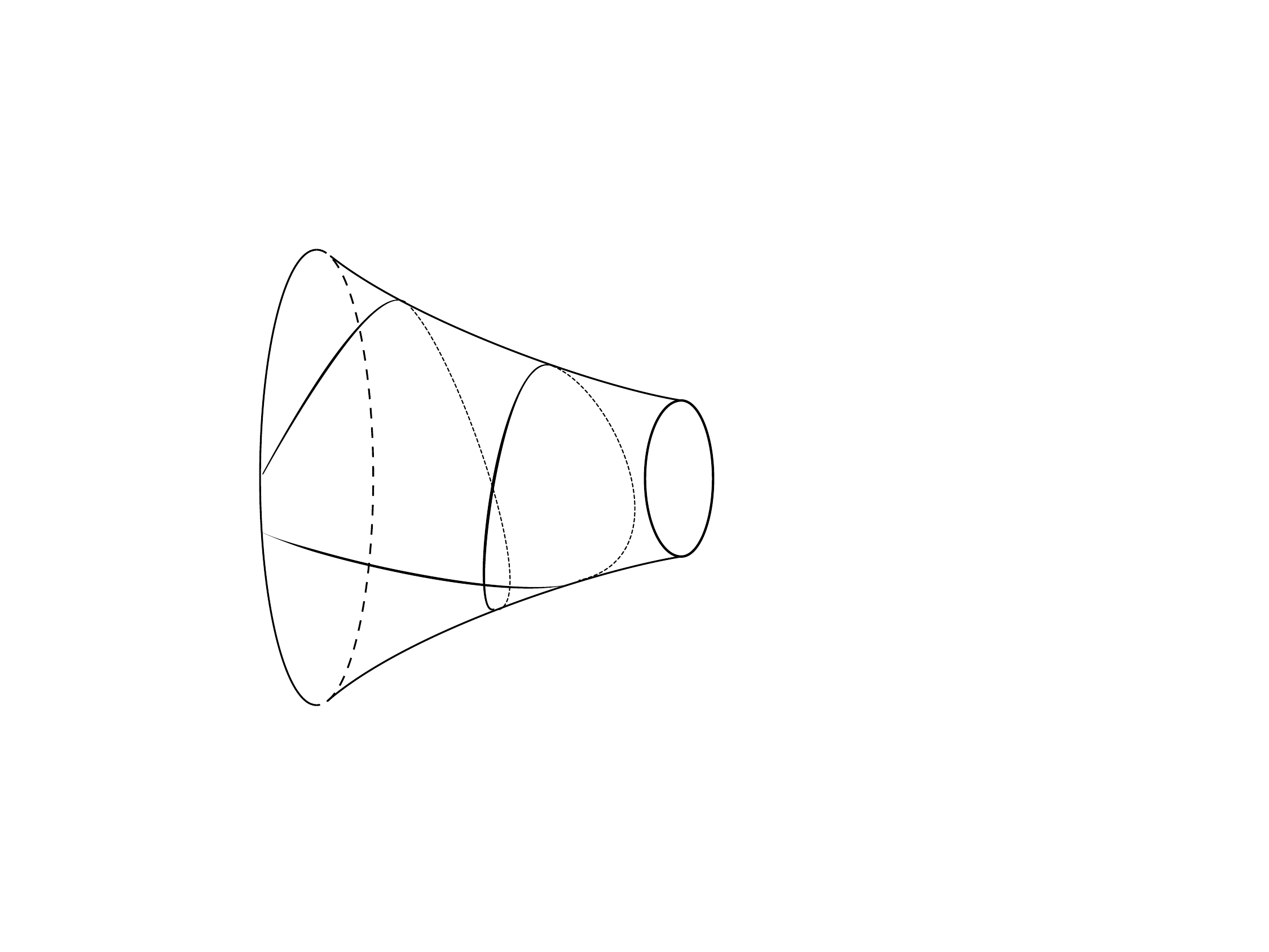}
 \put(73,35){$x$}
 \put(27,20){$1$}
 \put(43,50){$H_x$}
 \end{overpic}
 \vspace{-50pt}
 \caption{Type I: Arcs in $C_\alpha$ from one boundary back to the same, not intersecting $\alpha$}
 \label{fig:type I}
\end{figure}

\begin{proof}
 As $x$ goes to $0$, there is geometric convergence of $\mathcal{H}_x$ to $\mathcal{H}_0$. Restricting to the truncated compact regions 
 $\mathcal{H}_{x}^{t}$ and $\mathcal{H}_0^t$ implies the result (see Proposition \ref{prop: geometricconvergence}). 
 Since the total self-intersection of $\gamma$ is $n$, the arcs of type I remain in the truncated part of the cusp collar
 (see for instance \cite{basmajian2025primeorthogeodesics}), say in $\mathcal{H}_0^t$, for some $t>0$ that is a function of $n$. Using the bi-Lipschitz equivalence twice,
 first from $\mathcal{H}_x^t$ to $\mathcal{H}_0^t$, and then from $\mathcal{H}_0^t$ to $\mathcal{H}_y^t$ we have that 
 $\mathcal{H}_x^t$ is $C_{x}C_{y}$-biLipschitz equivalent to $\mathcal{H}_y^t$. Finally, $C_{x}C_{y} \leq C_1^2$ yields the result. 

\end{proof}

\begin{lemma}[Type II] \label{lem:geompants}
 For all $x$ where $0 \leq x \leq 1$, $P_x^1$ and $P_0^1$ are $D_x$-biLipschitz equivalent by a biLipschitz map that is an isometry on the boundary curves. If $0 \leq x,y \leq 1$, then the total length of the arcs of type II in $P_x^1$ is not increased by more than a multiplicative factor of $D_1^2$ by a biLipschitz mapping $\psi: \mathcal{P}_{x}^{1} \rightarrow \mathcal{P}_{y}^{1}$ which when restricted to the boundary of $\mathcal{P}_{x}^{1}$ is an isometry.
\end{lemma}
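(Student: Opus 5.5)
The plan is to mirror the proof of Lemma \ref{lem:geomcollars}, with Proposition \ref{prop: geometricconvergence} as the main input. For the first assertion, apply Proposition \ref{prop: geometricconvergence} to the pair of pants $P_x$ with cuff lengths $a$, $b$, $x$. It gives, for each $0\le x\le 1$, a $C_x$-biLipschitz map $P_x^1\to P_0^1$ that is an isometry on the boundary curves. So we may set $D_x:=C_x$. This is monotone in $x$ and tends to $1$ as $x\to 0$, so $D_x\le D_1$ for all $x\in[0,1]$.

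The one point needing care is that $P_x^1$ has boundary made of the two cuffs of lengths $a,b$ together with the equidistant curve of length $1$. On the cuffs the map must be an isometry compatible with the Fenchel--Nielsen gluing, so that it extends across $P\cup Q$. It must also be an isometry on the length-$1$ curve, parametrized by arclength from the foot of the simple orthogeodesic. Item (2) of Proposition \ref{prop: geometricconvergence}, which says the map respects the endpoints of the simple orthogeodesics, is exactly what fixes the basepoints on each boundary curve. Hence the map agrees with the canonical choice of endpoints used when rebuilding $\gamma$ on $Y_\gamma$.

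For the second assertion, given $0\le x,y\le 1$, define $\psi := \phi_y^{-1}\circ\phi_x$, where $\phi_x:P_x^1\to P_0^1$ and $\phi_y:P_y^1\to P_0^1$ are the maps above. Then $\psi:P_x^1\to P_y^1$ is $D_xD_y$-biLipschitz, so it is $D_1^2$-biLipschitz. It is an isometry on the boundary, since it is a composition of boundary isometries that both respect the orthogeodesic endpoints.

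Next, take the arcs of type II of $\gamma$ in $P_x^1$, which have endpoints on $\partial P_x^1$. Their images under $\psi$ are arcs in $P_y^1$ with the corresponding endpoints, in the same relative homotopy classes, and of length at most $D_1^2$ times the original. The geodesic representatives of these classes, rel endpoints on the boundary, are no longer than the image arcs. Summing over the finitely many arcs gives the bound on total length.

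The step I expect to be the main obstacle is justifying that the biLipschitz constant is uniform, independent of $a$ and $b$. Proposition \ref{prop: geometricconvergence} fixes $a,b$, so a priori $C_x$ depends on them. In the application, however, $a,b$ are lengths of curves in a short pants decomposition, so they are bounded above by the Bers constant $B_g$ of Theorem \ref{thm:Bers}. They may be small, but any cuff shorter than $1$ can be truncated at its own length-$1$ equidistant curve in the same way.

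I would argue compactness: $(a,b,x)$ ranges over a compact set $[0,B_g]^2\times[0,1]$, and the geometric convergence is continuous in all three parameters. Taking $D_1$ as the supremum over this set then gives a constant depending only on $g$. If this supremum argument is too delicate, the fallback is Lemma \ref{lem:bishop}: build the map as a quasiconformal map that is affine on the boundary, hence an isometry when the boundary lengths agree, with constant controlled by $B_g$.
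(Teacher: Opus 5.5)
Your proposal is correct and follows the paper's proof: take $D_x:=C_x$ from Proposition \ref{prop: geometricconvergence}, compose $\phi_y^{-1}\circ\phi_x$, and use monotonicity to get $D_xD_y\le D_1^2$. The uniformity in $a,b$ you worry about is not part of this lemma, since $a,b$ are fixed there; the paper handles it later, in the proof of Lemma \ref{lem:strict}, via the Bers bound on the cuffs, so your compactness argument and Bishop fallback are not needed (and the Bishop route would only give a quasiconformal map, not a biLipschitz one).
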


\begin{figure}[h]
 \centering
 \vspace{-20pt}
 \begin{overpic}[width=0.6\linewidth, trim=5cm 4cm 5cm 4cm, clip]{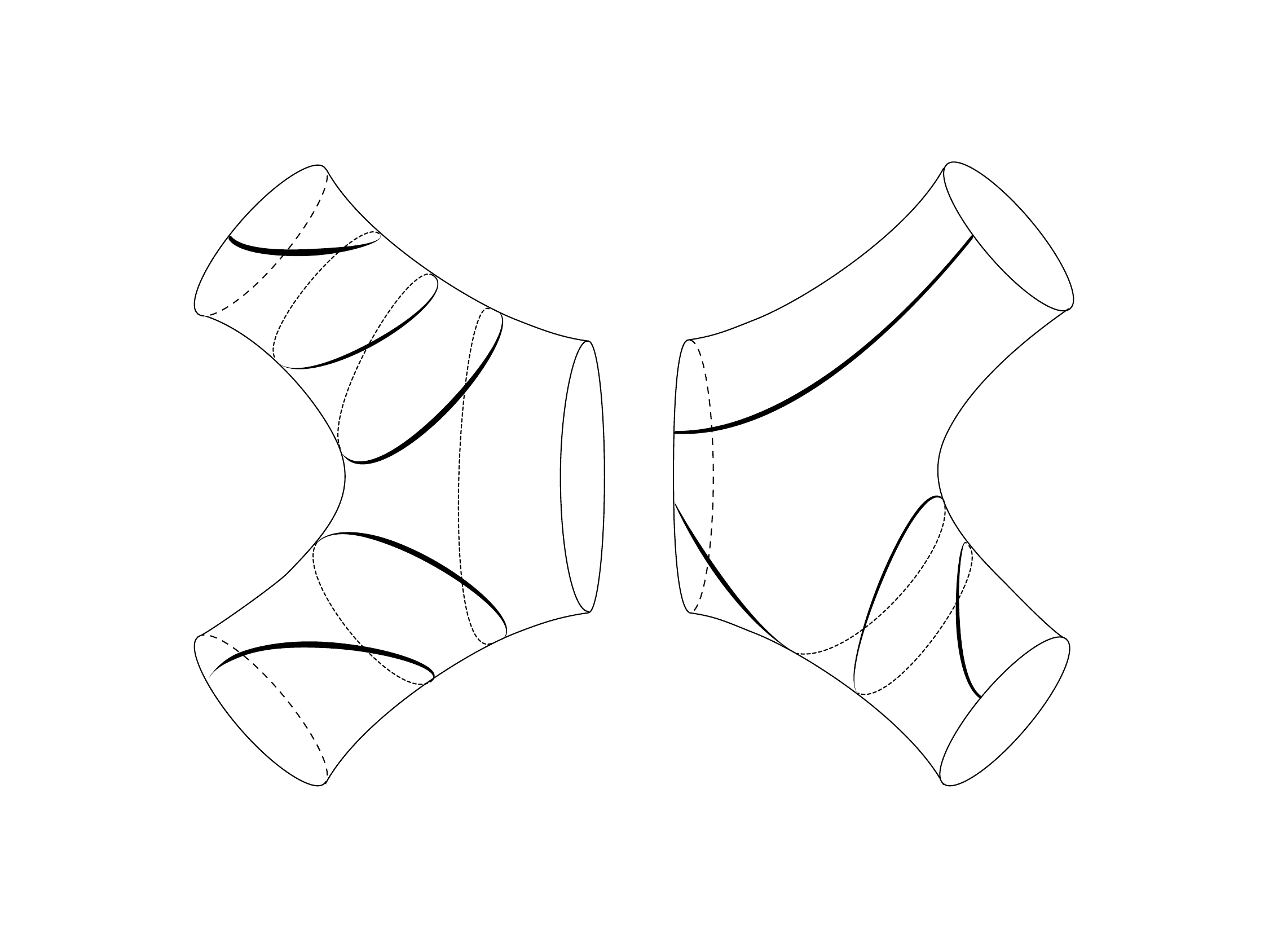}
 \put(47,17){$1$}
 \put(33,41){$P_x^1$}
 \end{overpic}
 \vspace{-10pt}
 \caption{Type II: Arcs in $P^{1}_x$}
 \label{fig:type II}
\end{figure}

\begin{proof}
As $x$ goes to $0$, $P_x$ geometrically converges to $P_0$. Now restricting to the compact regions $P_x^1$ and $P_0^1$ implies the desired biLipschitz equivalence, see Proposition \ref{prop: geometricconvergence}. By composing the biLipschitz map from $P_{x}^{1}$ to $P_{0}^1$ with the biLipschitz map from $P_{0}^1$ to $P^{1}_{y}$, we obtain a biLipschitz map from $P_{x}^{1}$ to $P^{1}_{y}$
with constant $D_{x} D_{y}$ and the result follows.
\end{proof}

Denote by $\mathcal{C}_{\alpha}(x)$ the collar about the geodesic $\alpha$ of length $x \leq 1$ with boundary equidistant curves of length 1. Note that $\mathcal{C}_{\alpha}(x)$ is uniquely determined by the length of $\alpha$. Let 
$\mathcal{A}$ be the set of $\gamma$ arcs 
 passing through $\mathcal{C}_{\alpha}(x)$ from one boundary component to the other. We denote the length of 
 an arc $s$ in $\mathcal{C}_{\alpha}(x)$ by $\ell_{s}(x)$.

\begin{definition}
Let $X$ be a closed hyperbolic surface and $\alpha$ a simple closed geodesic of length $x$ and $s$ a geodesic arc that enters and leaves the collar $\mathcal{C}_{\alpha}(x)$. Then the {\it geometric winding of $s$ about $\alpha$}

$$
gw_{\alpha}(s):=\biggl\lfloor \frac{L}{\ell_{\alpha}(X)} \biggr\rfloor
$$
where $L$ is the length of the orthogonal projection of $s$ onto $\alpha$.

\end{definition}

\begin{lemma}(Type III) \label{lem: typeIII}
 Suppose $\gamma$ is a closed curve with self-intersection number $n$ with geodesic arcs $\mathcal{A}$ passing through 
 $\mathcal{C}_{\alpha}(x)$, where $x \leq \frac{1}{5n}$. 
 Let $\mathcal{A}^{\prime}$ be any set of geodesic arcs passing through $\mathcal{C}_{\alpha}(\frac{1}{n})$ with the same number of arcs and total geometric winding number as $\mathcal{A}$. Then 

\begin{equation} \label{eq: sumofstrands}
\sum_{s^{\prime} \in \mathcal{A}^{\prime}} \ell_{s^{\prime}} \left(\frac{1}{n}\right)
\leq \sum_{s \in \mathcal{A}} \ell_{s}\left(x \right).
\end{equation}
\end{lemma}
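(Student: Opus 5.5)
We prove the inequality arc by arc, after first reducing to a one-variable length formula in the collar. In Fermi coordinates $(\rho,\theta)$ about $\alpha$, with $\rho$ the signed distance to $\alpha$ and $\theta$ arclength along $\alpha$ normalised so that $\theta\in[0,x)$, the metric of $\mathcal{C}_\alpha(x)$ is $d\rho^2+\cosh^2\rho\,d\theta^2$. The boundary curves sit at $\rho=\pm w(x)$, where $x\cosh w(x)=1$.

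\textbf{Step 1: an explicit length for a crossing arc.} A geodesic arc $s$ crossing the collar from one boundary curve to the other has some orthogonal projection length $L$ onto $\alpha$. By the right-angled quadrilateral (or trirectangle) formulas, its length is determined by $L$ and $w(x)$. Splitting $s$ at its intersection with $\alpha$ gives two right triangles with legs along $\alpha$, and
\[
\sinh\bigl(\tfrac{\ell_s}{2}\bigr)=\sinh w(x)\,\cosh\bigl(\tfrac{L}{2}\bigr)
\]
in the symmetric case. The asymmetric case has an analogous formula. In fact it is cleaner not to fix the endpoints. Among all crossing arcs with a given winding $k=gw_\alpha(s)$, the length is at least that of an arc whose projection is $kx$, and at most that of one with projection $(k+1)x$. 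So we compare using the quantity
\[
F(x,\lambda)=2\,\mathrm{arcsinh}\bigl(\sinh w(x)\cosh(\lambda/2)\bigr).
\]
Here $\sinh w(x)=\sqrt{1/x^2-1}$.

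\textbf{Step 2: a uniform bound on winding.} Since $i(\gamma,\gamma)=n$, each arc of $\mathcal{A}$ has winding at most about $n$. An arc that winds $k$ times around $\alpha$ contributes roughly $k$ self-intersections, so the total winding satisfies $\sum k_s\le n+|\mathcal{A}|$, or a comparable bound. The important point is that every $k_s$ is bounded by $n+1$. Consequently the projection lengths satisfy the following:
\begin{itemize}
\item on $\mathcal{C}_\alpha(x)$, $L_s\le (n+1)x\le \frac{n+1}{5n}$, which is small;
\item on $\mathcal{C}_\alpha(1/n)$, $L'\le (k+1)/n\le 2+1/n$, which is bounded.
\end{itemize}

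\textbf{Step 3: comparing the two sides.} The key comparison is between the collar width terms. For $x\le\frac1{5n}$, $\sinh w(x)\ge \sqrt{25n^2-1}$. For length $1/n$, $\sinh w(1/n)=\sqrt{n^2-1}$. Each arc on the left therefore has length at least
\[
2\,\mathrm{arcsinh}\sqrt{25n^2-1}\approx 2\log(10n).
\]
Each arc on the right has length at most
\[
2\,\mathrm{arcsinh}\bigl(\sqrt{n^2-1}\,\cosh\bigl(\tfrac{k+1}{2n}\bigr)\bigr)\le 2\log(2n\cosh(1))+o(1).
\]
The gain $2\log 5$ in width dominates the loss $2\log\cosh((k+1)/2n)$, provided $(k+1)/n$ stays bounded, for instance by $2$. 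So the inequality already holds term by term when windings are distributed evenly. In general the winding of an individual arc may be large. To handle this, I would work with the sum and use the concavity of $\lambda\mapsto\mathrm{arcsinh}(c\cosh(\lambda/2))$ for large arguments, which is essentially $\log c+\lambda/2$. On the right, the total extra length from winding is then at most $\sum (k_s+1)/n$, plus an additive constant per arc. On the left, an arc with winding $k$ also pays at least $k x$ extra, which is nonnegative. So it suffices to have
\[
|\mathcal{A}|\cdot 2\log 5\ \ge\ \frac{\sum_s(k_s+1)}{n}+|\mathcal{A}|\,c_0 .
\]
This follows from Step 2, since $\sum_s(k_s+1)\le n+2|\mathcal{A}|$.

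\textbf{Main obstacle.} The hard part is Step 2 together with the error constants. We must justify, via the intersection count, that the total winding through the collar is at most linear in $n$. We must also check that the constant $5$ suffices against the explicit error terms: the $\mathrm{arcsinh}$ versus $\log$ approximations, and the $\pm x$ ambiguity in the floor defining $gw_\alpha$. I would first try the crude estimate "each full turn beyond the first forces one self-intersection with every other crossing strand, or with itself", and then verify the numerics with the exact hyperbolic formulas.
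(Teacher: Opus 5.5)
Your overall plan is the paper's. You bound each original strand below by the full width $2\arccosh(1/x)\ge 2\arccosh(5n)$. You bound each new strand above by $2\arccosh(n)$ plus a winding cost $\mathrm{gw}_\alpha(s')/n$ plus $O(1)$. The gain of roughly $2\log 5$ per strand then pays for the winding, which needs $\sum_s \mathrm{gw}_\alpha(s)=O(n)$.

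The gap is Step~2, and it is not a matter of constants: $i(\gamma,\gamma)=n$ by itself does not bound the windings. A strand crossing the collar is simple there, and two crossing strands with the same signed winding meet at most once in the collar. So self-intersections only control \emph{differences} of signed windings, and your heuristic that each extra turn forces an intersection with every other strand or with itself is false. Concretely, changing the Fenchel--Nielsen twist along $\alpha$ by $N$ full turns shifts every strand's winding by about $N$ and leaves $n$ unchanged. Take $x=\frac{1}{5n}$. The right-hand side of the lemma then grows like $\frac{mN}{5n}$. For every admissible $\mathcal{A}'$ the left-hand side grows like $\frac{mN}{n}$, because $\ell_{s'}\ge 2\log\frac n2+\frac{\mathrm{gw}_\alpha(s')}{n}$. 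So the inequality fails once $N$ is a large multiple of $n$. For $m=1$ the single strand can wind arbitrarily often with no self-intersection at all.

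The missing input is an anchor for the twist, and it has to come from outside the intersection count. It comes from the fact that the arcs are those of $\gamma$ on its designer metric $X_\gamma$, which also minimizes $\ell_\gamma$ in the twist direction along $\alpha$. This is what the paper uses:
\begin{itemize}
\item For $m=1$, Wolpert's twist formula at $X_\gamma$ forces the strand to cross $\alpha$ orthogonally, so its winding is $0$.
\item For $m\ge 2$, it normalizes so that one strand has winding $0$. Each other strand of winding $k$ then meets that strand about $k$ times inside the collar, giving $\sum\mathrm{gw}_\alpha\le n$, and the estimate closes as in your Step~3.
\end{itemize}
You can also read the anchor off the cosine formula $\sum_{p\in\gamma\cap\alpha}\cos\theta_p=0$. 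Unless all strands are orthogonal, some lean each way. Comparing each strand with a fixed strand of opposite lean gives $\sum\mathrm{gw}_\alpha\le 2n+m$, which your sharper estimate $2\arccosh(5n)\ge 2\log(9.8\,n)$ still absorbs when $m\ge 2$.

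Two smaller corrections:
\begin{itemize}
\item For a crossing arc the right-triangle relation is $\cosh(\ell_s/2)=\cosh w(x)\cosh(L/2)$, not the $\sinh$ version. There is no asymmetric case, since both boundary curves lie at distance $w(x)$ from $\alpha$.
\item The map $\lambda\mapsto\arccosh(c\cosh(\lambda/2))$ is convex rather than concave. All you need, though, is the linear bound $\arccosh(c\cosh(\lambda/2))\le\log(2c)+\lambda/2$.
\end{itemize}
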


\begin{figure}[h]
 \centering
 \begin{overpic}[trim={1cm 1cm 1cm 5cm}, clip, width=0.6\linewidth]{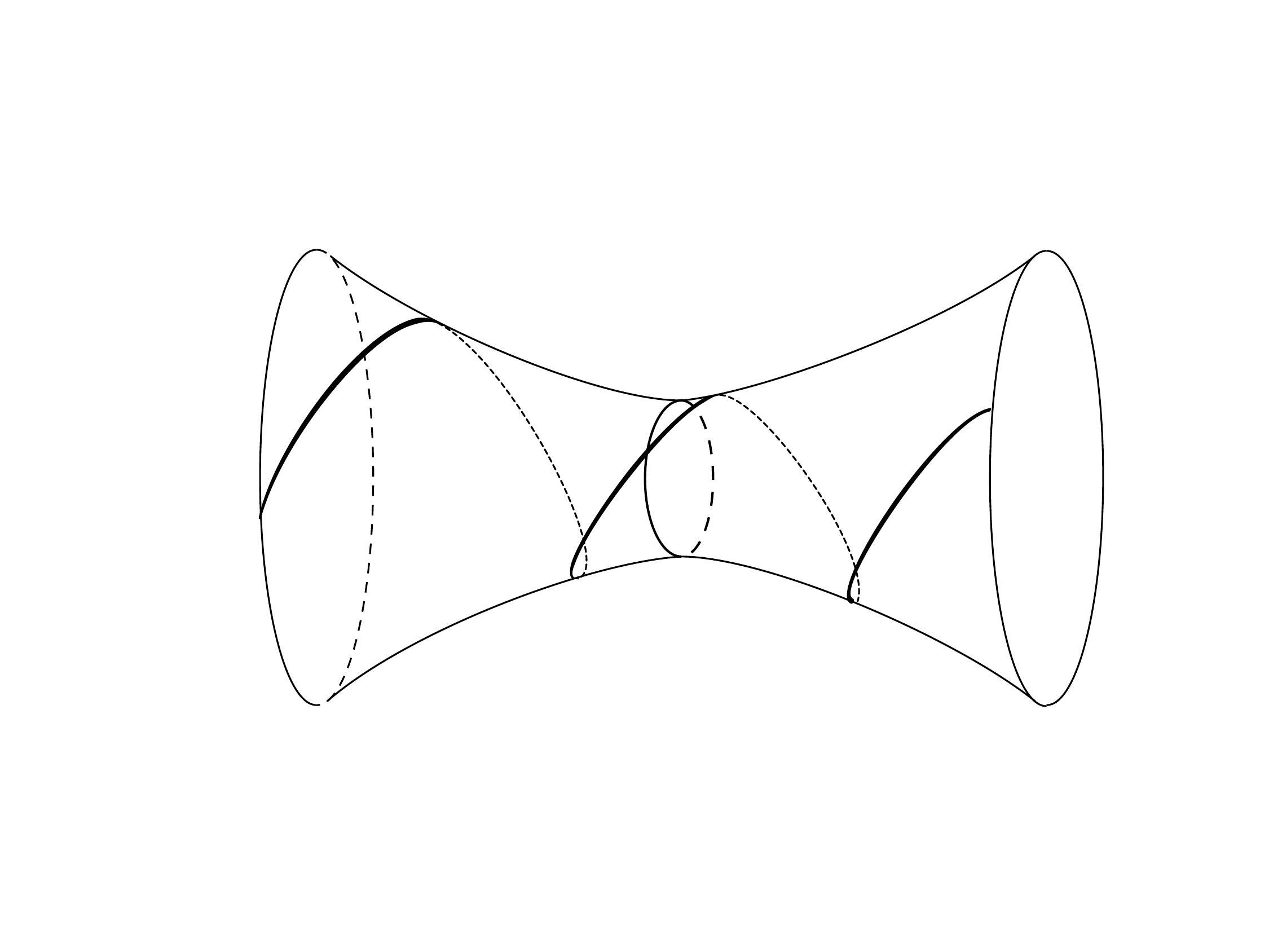}
 \put(20,15){$1$}
 \put(86,15){$1$}
 \put(51,25){$x$}
 \put(50,18){$C_{\alpha}(x)$}
 \end{overpic}
 \vspace{-20pt}
 \caption{Type III: Arcs in $C_\alpha$ from one boundary to the other, intersecting $\alpha$}
 \label{fig:type III}
\end{figure}

\begin{proof} 
Let $m=|\mathcal{A}|=|\mathcal{A}^{\prime}|$ be the number of strands. 
If $m=0$ then there is nothing to prove. If $m=1$, then Wolpert's 
infinitesimal twist formula (see \cite{BuserBook}) dictates that the single strand that passes through $\alpha$ must intersect $\alpha$ at a right angle with respect to the 
optimal metric $X_{\gamma}$. 

Next, suppose there are $m \geq 2$ strands. First note that without loss of generality we can assume (by Dehn twisting) that one of the strands has geometric winding number about $\alpha$ being 0. That is, the strand makes less than a full turn about 
$\alpha$. Now, since the self-intersection number of $\gamma$ is $n$ the sum

\begin{equation}\label{eq: sum}
\sum_{s^{\prime} \in \mathcal{A}^{\prime}} \text{gw}_{\alpha}(s^{\prime}) \leq n.
\end{equation}

Recall that the collar width of $\mathcal{C}_{\alpha}(\frac{1}{n})$ is 
$2 \arccosh \left( n \right) \leq 2 \log n +2\log 2$. Now note that an arc $s^{\prime} \in \mathcal{A}^{\prime}$ which traverses the collar can be upper bounded by twice the width of the collar plus 
$g\text{}w_{\alpha}(s^{\prime}) (\frac{1}{n})$ plus $\frac{1}{2}$ to allow for traversing boundary. That is, a piecewise geodesic comprised of an orthogeodesic segment, followed by traversing the geodesic 
$\alpha$ a $ \text{gw}_{\alpha}(s^{\prime})$ number of times, followed by another orthogeodesic segment, and finally an arc of length less than $\frac{1}{2}$ along the length one boundary. This yields

\begin{equation}
\ell_{s^{\prime}}\left(\frac{1}{n}\right) 
\leq 2 \log n +2 \log 2 + \text{gw}_{\alpha}(s^{\prime})\cdot \frac{1}{n} +\frac{1}{2}.
\end{equation}

Using inequality (\ref{eq: sum}) and summing over the geodesic arcs we have

\begin{align}
\sum_{s^{\prime} \in \mathcal{A}^{\prime}}\ell_{s^{\prime}}\left(\frac{1}{n}\right)
&\leq 2m \log n + 2m \log 2 + 1 +\frac{m}{2} = 2m \log n + m \left(2\log 2 +\frac{1}{2}\right)+ 1 \\
\label{eq:upperbound} &\leq 2m \log n + 2m+ 1 \leq 2m\log n + 2m \log 5 =2m \log 5n \\
& \leq 2m \arccosh \left(5n \right) = \sum_{s \in \mathcal{A}} 2 \arccosh \left(5n \right) \leq \sum_{s \in S} \ell_{s}\left(\frac{1}{5n} \right) \\
&\leq \sum_{s \in \mathcal{A}} \ell_{s}\left(x \right)
\end{align}

In the second inequality of expression (\ref{eq:upperbound}) we have used the fact that $m \geq 2$.
The next to last inequality follows from the fact that a strand in $X$ passing through the collar must have length at least 
 the width of the collar which is $2 \arccosh \left(5n \right)$. Finally, the last inequality follows from our assumption that 
 $x \leq \frac{1}{5n}$.
\end{proof}

\begin{lemma} (local surgery)\label{lem:strict} Suppose $X$ is a hyperbolic surface with a short pants decomposition 
 having a pants curve $\alpha$ of length less than $\frac{1}{5n}$. Then the metric $Y$ with all Fenchel-Nielsen coordinates the same as that for $X$ except for the $\alpha$ curve for which $\ell_{\alpha}(Y)=\frac{1}{n}$
satisfies 
\begin{equation}
 \ell_{\gamma}(Y)\leq C\ \ell_{\gamma}(X) \text{, for any curve $\gamma$ with $i(\gamma, \gamma)=n$}.
\end{equation}
The constant $C \geq 1$ depends only on the topology of $X$.
\end{lemma}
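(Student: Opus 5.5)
We prove the lemma by cutting the geodesic representative of $\gamma$ on $X$ into pieces adapted to $\alpha$, transplanting each piece to $Y$, and bounding lengths piece by piece. Let $x=\ell_\alpha(X)<\frac{1}{5n}$, and let $P,Q$ be the pants of the short pants decomposition adjacent to $\alpha$ (or the single pants, if $\alpha$ lies in a one-holed torus). Write $\gamma_X$ for the geodesic representative of $\gamma$ on $X$. Decompose $\gamma_X$ into three kinds of pieces:
\begin{itemize}
\item arcs lying outside $P\cup Q$;
\item arcs of type I and II inside $P\cup Q$;
\item arcs of type III crossing the collar $\mathcal{C}_\alpha(x)$.
\end{itemize}
Outside $P\cup Q$ the Fenchel--Nielsen data are unchanged. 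So $X\setminus(P\cup Q)$ and $Y\setminus(P\cup Q)$ are isometric, with the same gluing, and the arcs there are copied verbatim.

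On $Y$ we build a closed curve $\gamma'$ freely homotopic to $\gamma$, which gives $\ell_\gamma(Y)\le \ell(\gamma')$.
\begin{itemize}
\item For each type II arc in $P_x^1$ (or $Q_x^1$), take its image under the biLipschitz map $\psi$ of Lemma \ref{lem:geompants}. The length grows by at most $D_1^2$. Since $\psi$ is an isometry on boundary curves, the endpoints on the cuffs of length $a,b$ are unchanged, and the endpoints on the length-one boundary of the collar are placed by the orthogeodesic/arclength convention.
\item For each type I arc, use the map $\phi$ of Lemma \ref{lem:geomcollars}, at cost at most $C_1^2$.
\item For the type III arcs, choose arcs in $\mathcal{C}_\alpha(\frac1n)$ joining the corresponding boundary points with the same geometric winding numbers about $\alpha$. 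These form a family $\mathcal{A}'$ as in Lemma \ref{lem: typeIII}, so their total length is at most that of $\mathcal{A}$.
\end{itemize}
Because the maps are isometries on the length-one boundary curves and respect the orthogeodesic endpoints (Proposition \ref{prop: geometricconvergence}), consecutive pieces match up and $\gamma'$ is a closed curve. Summing the three estimates gives
\[
\ell_\gamma(Y)\le \ell(\gamma')\le \max\{C_1^2,D_1^2,1\}\,\ell_\gamma(X).
\]

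The key point is that $C_1$ and $D_1$ must depend only on the topology. They a priori depend on the cuff lengths $a,b$ of the pants adjacent to $\alpha$. These are bounded above by the Bers constant $B_g$, since the decomposition is short. They may also be small: any such cuff is another short curve, but the region $P_x^1$ we use excludes only the collar about $\alpha$. To get uniformity, we would argue by compactness. The family of truncated pants $P^1_x$ with $a,b\le B_g$ (allowing $a,b\to0$, with the other short cuffs also truncated at length one) and $x\in[0,1]$ is a compact family in the geometric topology. The biLipschitz constants of Proposition \ref{prop: geometricconvergence} and Lemma \ref{lem:bishop} vary continuously, so they are bounded by a constant $C(g)$. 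If other cuffs of $P$ or $Q$ are also short, their collars are handled by the same type I/II/III decomposition, either simultaneously or in the ordering the paper prescribes.

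A second subtlety is the homotopy class of the rebuilt curve. We need $\gamma'$ to be freely homotopic to $\gamma$ under the identification of $X$ and $Y$ given by Fenchel--Nielsen coordinates. This is ensured by preserving, for each type III arc, its winding (equivalently, its twisting relative to the zero-twist orthogeodesic seams). The twist parameter of $\alpha$ may be reset to $0$ at the cost of a Dehn twist, and that is absorbed into the winding numbers as in the proof of Lemma \ref{lem: typeIII}. Lemma \ref{lem: typeIII} also requires the winding sum bound \eqref{eq: sum}, which uses $i(\gamma,\gamma)=n$. I expect the main obstacle to be this bookkeeping, together with the uniformity of the constants; the length comparison itself is routine once those are in place.
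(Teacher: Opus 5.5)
Your proposal follows the paper's proof essentially step for step: arcs outside $P\cup Q$ are copied verbatim, type I and II arcs are pushed through the boundary-isometric biLipschitz maps of Lemmas \ref{lem:geomcollars} and \ref{lem:geompants}, type III arcs are replaced by arcs in $\mathcal{C}_\alpha(\frac1n)$ with the same endpoints and winding (controlled by Lemma \ref{lem: typeIII}), and the pieces are concatenated; your compactness argument for uniformity when the other cuffs of $P$ or $Q$ are short also fills in a point the paper covers only with the Bers upper bound. One caution, which applies equally to the paper: geometric winding numbers are intrinsic to the geodesic on $X$, so neither a twist reset (unnecessary anyway, since $Y$ changes only $\ell_\alpha$) nor a Dehn-twist normalization gives \eqref{eq: sum} from $i(\gamma,\gamma)=n$ alone; on a suitable $X$, twisting a curve that crosses $\alpha$ once about $\alpha$ roughly $1/\ell_\alpha(X)$ times keeps $i(\gamma,\gamma)$ fixed but makes $\ell_\gamma(Y)/\ell_\gamma(X)$ unbounded as $\ell_\alpha(X)\to 0$, so such a bound must come from the twist-optimality of the designer metric $X=X_\gamma$, which is the setting in which Proposition \ref{prop: thickened metric} uses the lemma.
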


\begin{proof} Let $\gamma \subset X$ have intersection number $n$, and suppose the curve $\alpha$ bounds the pairs of pants $P$ and 
$Q$. Let $X^{\prime}$, resp. $Y^{\prime}$, be the geometry of $X$, resp. $Y$, outside $P \cup Q$. By assumption
$X^{\prime}=Y^{\prime}$, and hence $\ell_{\gamma}(X^{\prime})=\ell_{\gamma}(Y^{\prime})$. As a result, we need only focus on the $\gamma$ arcs in $P \cup Q$. Moreover, since arcs of type I or II are contained in one of the pants $P$ or $Q$, we will argue on the pair of pants $P$, the same holds for the type I or II arcs on $Q$.

Arcs of type I are controlled by Lemma \ref{lem:geomcollars}. There is a $C$-biLipschitz map $\phi : H_{x}^{t} \rightarrow H_{\frac{1}{n}}^t$ which is an isometry on the length 1 boundary curve. Here $t$ is a function of $n$. If $s$ is such an arc
then the biLipschitz image $\phi (s)$ has the same relative position on the boundary of length 1 as $s$ does.

Arcs of type II are controlled by Lemma \ref{lem:geompants}. There is a $C$-biLipschitz map $\psi : P_{x}^{1} \rightarrow P_{\frac{1}{n}}^1$ which is an isometry on the boundary of $P_{x}^{1}$. If $s$ is such an arc, then $\psi (s)$ has the same relative position on the boundary of $P_{x}^{1}$ as $s$ does. Here, the biLipschitz constant depends on the lengths of the other two boundary geodesics of the pair of pants $P$. Thus, the biLipschitz constant may be different for $Q$, but we will take the larger of the two. In the end, it will not matter since all the pants curves have length bounded above by the Bers constant. 

The union of the type III arcs is the set $\gamma \cap \mathcal{C}_{\alpha}(x)$. For each type III arc $s$ in $\gamma \cap \mathcal{C}_{\alpha}(x)$, we construct a geodesic arc 
$s^{\prime} \in \mathcal{C}_{\alpha}(\frac{1}{n})$ that starts and ends at the same boundary points as $s$ did, and which has the same geometric winding number around $\alpha$ as $s$ does. By Lemma \ref{lem: typeIII} we have
\begin{equation}
\sum_{s^{\prime} \in \gamma \cap \mathcal{C}_{\alpha}(\frac{1}{n})} \ell_{s^{\prime}} \left(\frac{1}{n}\right)
\leq \sum_{s \in \gamma \cap \mathcal{C}_{\alpha}(x)} \ell_{s}\left(x \right).
\end{equation}

We now build a curve $\gamma^{\prime} \subset Y$ in the same homotopy class as $\gamma$. Outside of the two pairs of pants 
that bound $\alpha$, we let $\gamma^{\prime}=\gamma$. Hence, $\ell_{\gamma^{\prime}}(Y^{\prime})=\ell_{\gamma}(X^{\prime})$.

Choose an orientation on the geodesic $\gamma \subset X$ and an initial point. Then $\gamma$ decomposes into a
 union of ordered geodesic arcs $s_{1}, s_{2},..., s_{m}$ in $X$; that is,
 $\gamma = s_{1}*s_{2}*...*s_{m}$
 where an arc $s_{i}$ is either a connected segment of $\gamma$ in $X-\{PUQ \}$, a type I arc in $P$ (or in $Q$),
 a type II arc in $P$ (or in $Q$), or a type III arc in $P \cup Q$.

 We next define a curve $\gamma^{\prime} \subset Y$ using the biLipschitz mappings $\phi$ and $\psi$ above. 

 Define

\begin{equation}
\Phi (s_{i})= 
\begin{cases}
 s_{i} , \text{ if } s_{i} \subset X^{\prime}\\
 \phi (s_i), \text{ if } s_{i} \text{ is type I in } P \cup Q \\
 \psi (s_i), \text{ if } s_{i} \text{ is type II in } P \cup Q \\
 s_{i}^{\prime}, \text{ if } s_{i} \text{ is type III in } P \cup Q 
\end{cases}
\end{equation}

Then our curve in $Y$ is 

\begin{equation}
 \Phi (\gamma)=\Phi(s_{1})*\Phi(s_{2})*...*\Phi(s_{m}) 
\end{equation}

Since the biLipshitz mapping $\phi$ respects the relative position of a point on the boundary of the truncated half-collar 
and $\psi$ respects the relative position of a point on the boundary of the truncated pair of pants, our constructed curve 
$\Phi (\gamma)$ is continuous and in the same homotopy class as $\gamma$. Moreover, by construction, at worst $\Phi$ increases the length of an arc by the maximum of the two biLipshitz constants. Thus we have

\begin{equation}
\ell_{\gamma}(Y)\leq 
\sum_{i=1}^{m} \ell_{\Phi(s_{i})}(Y)
\leq C \sum_{i=1}^{m} \ell_{s_{i}}(X)
=C \ell_{\gamma}(X)
\end{equation}
The first inequality is from the fact that the $Y$-length of the geodesic in the homotopy class of $\gamma$ is at most 
the $Y$-length of $\Phi (\gamma)$. Finally, the constant $C \geq 1$ depends only on the topology of $X$.
\end{proof}

\begin{proposition} \label{prop: thickened metric}
For $\gamma$ a closed curve with $i(\gamma,\gamma)=n$, there exists a metric $Y_{\gamma}$ such that $\ell_{\gamma}(Y_{\gamma})\leq c\, m_{\gamma}$ where $\sys(Y_\gamma)\geq \frac{1}{5n}$. The constant $c \geq 1$ depends only on the topology of $X$.
\end{proposition}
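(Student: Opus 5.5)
The plan is to start from the designer metric $X_\gamma$, which realizes $\ell_\gamma(X_\gamma)=m_\gamma$, and apply Lemma \ref{lem:strict} once for each short curve. If $\sys(X_\gamma)\geq \frac{1}{5n}$ we set $Y_\gamma=X_\gamma$ and $c=1$. Otherwise, let $\alpha_1,\dots,\alpha_k$ be the closed geodesics of length $<\frac{1}{5n}$ on $X_\gamma$. Since $\frac{1}{5n}\leq 1$, the collar lemma makes them pairwise disjoint, and by Theorem \ref{thm:Bers} they lie in a short pants decomposition $P$ whose curves all have length $<B_g$. In particular $k\leq 3g-3$.

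Order the short curves arbitrarily and define $X^{(0)}=X_\gamma$. Let $X^{(j)}$ be obtained from $X^{(j-1)}$ by setting $\ell_{\alpha_j}=\frac1n$ and $\tau_{\alpha_j}=0$, keeping every other Fenchel--Nielsen coordinate with respect to $P$ unchanged. At step $j$ we must check that the hypotheses of Lemma \ref{lem:strict} still hold. Only the lengths of $\alpha_1,\dots,\alpha_{j-1}$ have changed, and each is now $\frac1n\leq 1<B_g$ (we may assume $B_g\geq 1$). So $P$ remains a pants decomposition with all curves bounded by $B_g$, and $\alpha_j$ still has length $<\frac{1}{5n}$. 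The constant $C$ in Lemma \ref{lem:strict} depends only on the Bers bound on the pants curves and on the topology, so the same $C$ works at every step. The lemma then gives $\ell_\gamma(X^{(j)})\leq C\,\ell_\gamma(X^{(j-1)})$. Setting $Y_\gamma=X^{(k)}$, we obtain
\[
\ell_\gamma(Y_\gamma)\leq C^{k}\,m_\gamma\leq C^{3g-3}\,m_\gamma,
\]
so $c=C^{3g-3}$ depends only on $g$.

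It remains to bound the systole of $Y_\gamma$, and this is the step that needs the most care. The curves $\alpha_j$ now have length $\frac1n\geq\frac1{5n}$. We must also rule out a new short geodesic $\beta$ appearing, including the case where $\beta$ is not in $P$. First, the pants curves not among the $\alpha_j$ are untouched and had length $\geq\frac1{5n}$ already. For any other geodesic $\beta$, we argue as follows. If $\beta$ has length $<\frac1{5n}$ in $Y_\gamma$, then $\beta$ is simple and must cross some pants curve $\delta$ of $P$. All pants curves of $Y_\gamma$ have length $\leq B_g$. If $\ell(\delta)\leq 1$, the collar about $\delta$ has width at least $2\arcsinh(1/\sinh\frac12)>1$, so $\ell(\beta)>1$, a contradiction. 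If instead $1<\ell(\delta)\leq B_g$, we would need a lower bound depending only on $g$. This is not automatic, since a fixed $\delta$ of length up to $B_g$ can still be crossed by arbitrarily short curves.

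The cleanest fix is to observe that the only pants whose geometry changed are those adjacent to the $\alpha_j$. A curve $\beta$ that is short in $Y_\gamma$ but was not short in $X_\gamma$ must therefore enter such a pair of pants. Within those pants, the truncated pieces $P^1_{1/n}$ are $D_1$-biLipschitz to the corresponding $P^1_x$ by Lemma \ref{lem:geompants}, and the collars $\mathcal{C}_{\alpha_j}(\frac1n)$ contain no short closed curves other than $\alpha_j$ itself. Comparing with $X_\gamma$, where the only curves of length $<\frac1{5n}$ were the $\alpha_j$, should rule out any new short curve, possibly after replacing $\frac1{5n}$ by a constant multiple or choosing the order of the surgeries suitably. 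I expect this systole verification, and in particular getting exactly the constant $\frac{1}{5n}$, to be the main obstacle. If the direct comparison fails, the fallback is to note that any new short $\beta$ is disjoint from $P$-curves of length $\leq 1$, and to repeat the thickening procedure. Each round changes only curves of length $<\frac1{5n}$ to length $\frac1n$, the number of rounds is bounded by $3g-3$, and the constant remains of the form $C^{O(g)}$.
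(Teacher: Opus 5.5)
Your length estimate is the paper's argument: iterate Lemma~\ref{lem:strict} over the curves of length $<\frac{1}{5n}$ in a fixed short pants decomposition. You get $c=C^{3g-3}$ by compounding one factor of $C$ per surgery. The paper claims $c=C^3$, arguing that surgeries on curves not bounding a common pair of pants have independent biLipschitz maps and at most three surgered curves bound any one pair of pants. Both constants depend only on $g$. Your check that the hypotheses of Lemma~\ref{lem:strict} survive each step is more careful than the paper's.

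For the systole bound the paper gives no argument beyond asserting $\sys(Y_\gamma)\geq\frac{1}{5n}$, so your concern is legitimate. However, one claim in your discussion is false. A curve $\beta$ crossing a pants curve $\delta$ with $\ell(\delta)<B_g$ cannot be arbitrarily short, because it must traverse the collar of $\delta$. This gives
\[
\ell(\beta)\geq 2\arcsinh\bigl(1/\sinh(B_g/2)\bigr),
\]
the reasoning behind Lemma~\ref{lem: model metric properties}(1). Hence
\[
\sys(Y_\gamma)\geq\min\bigl\{\tfrac{1}{5n},\,2\arcsinh(1/\sinh(B_g/2))\bigr\},
\]
which is $\frac{1}{5n}$ once $n$ is large in terms of $g$. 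The exact constant is only in question for small $n$. Your biLipschitz ``cleanest fix'' loses a multiplicative constant and can be dropped.

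Your fallback is the right way to get exactly $\frac{1}{5n}$ for every $n$, but you should say why it terminates after at most $3g-3$ surgeries.
\begin{itemize}
\item After any round the thickened curves have length $\frac1n\leq1$. By Theorem~\ref{thm:Bers} they can be put in every later short pants decomposition, chosen to contain all curves of length at most $1$, so they are never modified again.
\item Any newly short curve has length $<\frac{1}{5n}$. By the collar lemma it is disjoint from, and distinct from, the thickened ones.
\end{itemize}
So the thickened curves form a strictly growing family of pairwise disjoint simple closed curves. Hence there are at most $3g-3$ surgeries in total, each costing a factor $C$ (uniform, since every short pants decomposition is bounded by $B_g$), and the constant is still $C^{3g-3}$. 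With this made explicit, your proposal is complete and justifies a step the paper leaves unargued.
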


\begin{proof} Let $\gamma$ be a closed curve with $i(\gamma,\gamma)=n$. 
By the collar lemma, there are at most $3g-3$ curves of length less than $\frac{1}{5n}$ on $X_{\gamma}$ which we denote as 
$\alpha_1,\alpha_2,..., \alpha_m$; $0 \leq m \leq 3g-3$. Take a short pants decomposition which necessarily includes these curves. Using Lemma \ref{lem:strict}, we perform surgery on $\alpha_1$ to obtain the metric $Y_1$ which has 
$\ell_{\alpha_1} (Y_1)=\frac{1}{n}$ and the geometry in the complement of the pants containing $\alpha_1$ being unchanged. In particular, the lengths of the other pants curves remain the same. We continue by performing this surgery on $\alpha_2$, to obtain $Y_2$ and continue $m$-times to achieve the final metric $Y_{\gamma}$ with $\sys (Y_{\gamma}) \geq \frac{1}{5n}.$ 
If the surgered curves do not bound the same pair of pants in the short decomposition then their respective bilipschitz maps are independent of each other, and the multiplicative constant $C$ in Lemma \ref{lem:strict} remains the same. Since there are at most 3 surgered curves that can bound a pair of pants, the multiplicative constant is at most 
$C^3$ (where $C$ is the constant from Lemma \ref{lem:strict}), and we conclude that 
$\ell_{\gamma}(Y_{\gamma})\leq C^{3} \ell_{\gamma}(X_{\gamma})= C^{3} m_{\gamma}$.
\end{proof}

\begin{remark}
 We note here that $c=C^3$, where $C$ is the constant in Lemma \ref{lem:strict}. 
\end{remark}

\section{Model metrics and their properties}\label{sec:model}

\subsection{Building model metrics and finding such metrics close to a (thickened) designer metric}

{\bf Model metrics in $\M_g^{1/5n}$:} Set $n$ to be a positive integer, and let $P=\{\alpha_1,...,\alpha_{3g-3} \}$ be a pants decomposition of $\Sigma_{g}$. Let $\Vec{k}=(k_{1},...,k_{3g-3})$ where $0 \leq k_i \leq n$, for $i=1,...,3g-3$. We define
 $Z^{P}_{\vec{k}}$ to be the metric in Fenchel-Nielsen coordinates having zero twists and pants lengths
\begin{equation}
\ell_{\alpha_{i}}(Z^{P}_{\vec{k}})=\frac{k_{i}}{n} \text{ for } 0 \leq k_{i} \leq n.
\end{equation}
By convention $k_i=0$ corresponds to $\alpha_i$ representing a cusp. In such a situation the corresponding model metric is a noded hyperbolic surface on the boundary of moduli space.

Considering all such constructions with pants decompositions in different mapping class group orbits yields the set of model metrics, $\model^n$. 
That is

\begin{equation*}
\model^n := \left\{Z^{P}_{\vec{k}} | \text{ P a $\MCG_g$ orbit of a pants decomp., } \vec{k}=(k_1,...,k_{3g-3}),\, 0 \leq k_i \leq n \right\}.
\end{equation*}

Though we have a great number of model metrics, we can more or less count them: for each topological type of pants decomposition, we have $(n+1)^{3g-3}$ model metrics. 

\begin{figure}[h] 
 \centering
 \begin{overpic}[trim={1cm 10cm 1cm 10cm}, clip, width=0.6\linewidth]{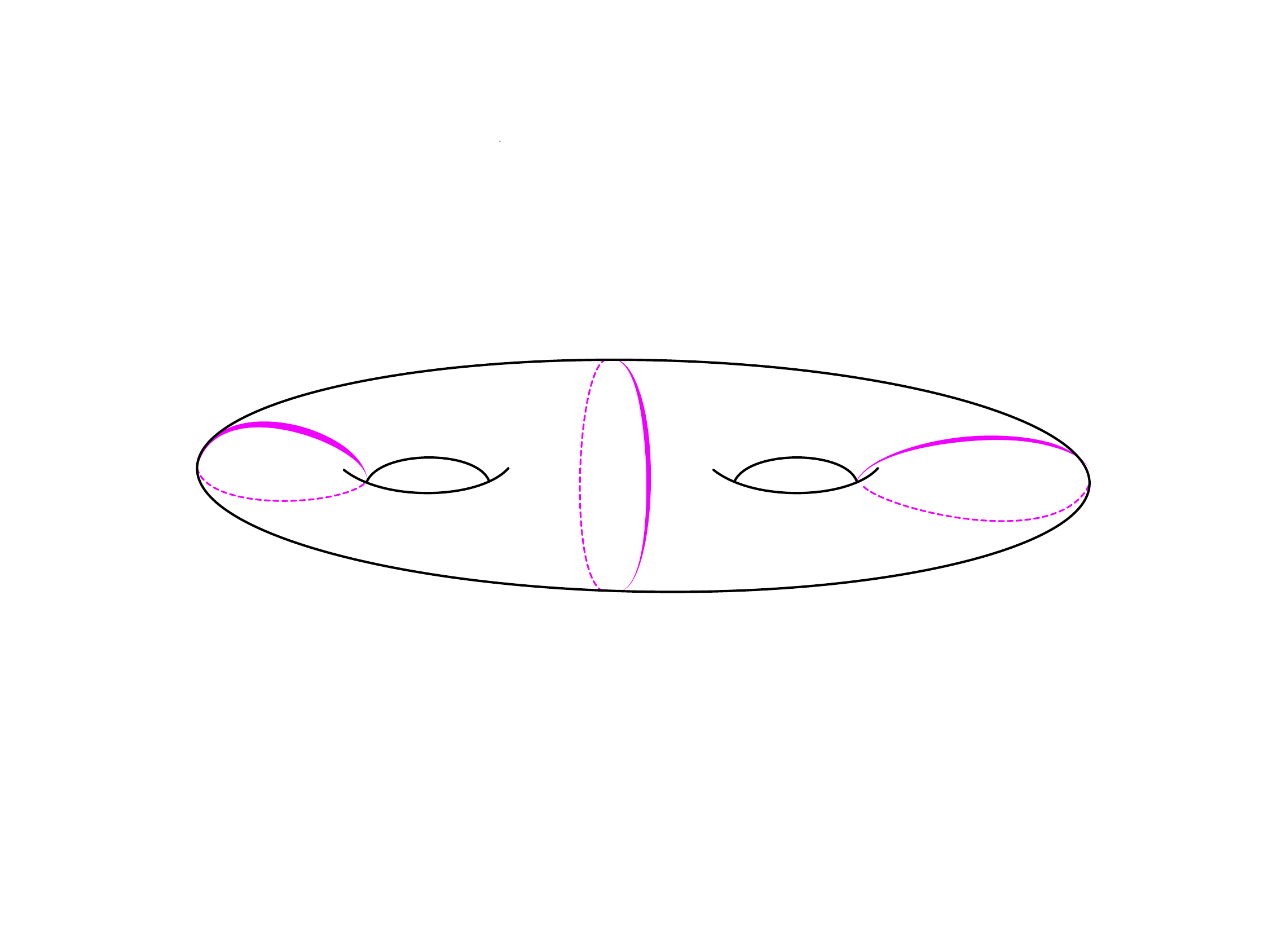}
 \put(20,12){$\alpha_1$}
 \put(77,11){$\alpha_3$}
 \put(46,15){$\alpha_2$}
 \end{overpic} 
 \caption{Pants decomposition of a genus $2$ surface} 
 \label{fig:pants decomp}
\end{figure}

{\noindent}{\bf Example:} Consider the pants decomposition 
$P=\{\alpha_{1}, \alpha_{2}, \alpha_3\}$ of the genus $2$ surface depicted in Figure 
\ref{fig:pants decomp}.
Set $n=3$ and consider the $\frac{1}{15}$-thick moduli space, $\M_2^{1/15}$. Then the set of model metrics for this pants decomposition have length parameters 
$$
\ell_{\alpha_{i}}(Z^{P}_{\vec{k}})=\frac{k_{i}}{n} 
$$
where $\vec{k}=(k_{1},k_{2},k_{3})$, for $k_{i} \in \{0,1,2,3\}$. 
There are $4^{3}$ model metrics corresponding to this pants decomposition. For example,

\begin{equation}
\ell_{\alpha_{i}}\left(Z^{P}_{(1,1,2)}\right)= 
\begin{cases}
 \frac{1}{3}, \text{ for } i=1,2\\
 \frac{2}{3}, \text{ for } i=3 \\
\end{cases}
\end{equation}

In the next lemma we show the main technical result we need, namely that any thickened metric for a curve with at most $n$ self-intersections is close to a model metric in $\model^n$.

\begin{lemma}[Model metric properties]\label{lem: model metric properties}
Let $\model$ be the set of all model metrics, and $Z^{P}_{\vec{k}}$ be a model metric associated to the pants decomposition
$P$.
\begin{enumerate}
\item If all the coordinates of $\vec{k}$ are non-zero, then the model metric $Z^{P}_{\vec{k}}$ is contained in $\M_g^{1/5n}$
\item If a coordinate of $\vec{k}$ is zero, then $Z^{P}_{\vec{k}}$ is a surface with nodes on the boundary of $\M_g$. Moreover, the number of such zeros correspond to the number of nodes on the hyperbolic surface. 

\item The number of mapping class group orbits of pants decompositions for a genus $g \geq 2$ surface, denoted $t_{g}$, only depends on the 
topology of the surface. Moreover, $t_g$ is the number of trivalent graphs with 
exactly $2g-2$ vertices.

\item \label{item: modelmetricnumber} the number of model metrics in 
$\model^{n}$ is $t_{g} (n+1)^{(3g-3)}$.

\item \label{item: qc constant} There exists a constant $Q_g$, which only depends on $g$, such that any surface in $\M_g^{1/5n}$ is $Q_g$-quasiconformally equivalent to a model metric in $\model^n$. 
Similarly, any metric in the $\frac{1}{5n}$-thick part of the boundary of $\M_g$ is $Q_g$-quasiconformally equivalent to a model metric in $\model^{n}$.

\item any metric in $\model^{n-1}$ is $Q_g$-quasiconformally equivalent to some model metric in $\model^{n}$.
\end{enumerate}
\end{lemma}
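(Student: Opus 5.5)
Items (1)--(4) are essentially bookkeeping, and I will dispose of them first. For (1), the pants curves of $Z^{P}_{\vec{k}}$ have length $k_i/n\ge 1/n$, and all of them have length at most $1$. By the collar lemma, any closed geodesic other than a pants curve must cross some pants curve $\alpha_i$, and hence traverse its collar. That collar has width at least $2\arcsinh(1/\sinh(1/2))>1/5$. Any closed geodesic lying inside a pair of pants with cuffs of length at most $1$ is either a cuff or has length bounded below by a universal constant. Hence the systole is $\min_i k_i/n\ge 1/n\ge 1/(5n)$. For (2), setting $\ell_{\alpha_i}=0$ in Fenchel--Nielsen coordinates pinches $\alpha_i$ to a node, so each zero coordinate contributes exactly one node; the zero twist convention is harmless there. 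For (3), associate to a pants decomposition its dual graph: one vertex per pair of pants ($2g-2$ of them) and one edge per pants curve ($3g-3$ of them). This graph is trivalent, with loops allowed. Two decompositions lie in the same $\MCG_g$ orbit if and only if their dual graphs are isomorphic, since the pants can be reglued by homeomorphisms realizing any graph isomorphism. This gives $t_g$, which depends only on $g$. Item (4) then follows by multiplying $t_g$ by the $(n+1)^{3g-3}$ choices of $\vec{k}$.

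The core is (5). Given $X\in\M_g^{1/5n}$, I take a short pants decomposition $P$ from Theorem \ref{thm:Bers}, so that $1/(5n)\le \ell_{\alpha_i}(X)\le B_g$. The natural candidate model metric uses the same topological type of $P$, with $k_i$ chosen so that $k_i/n$ approximates $\ell_{\alpha_i}(X)$: take $k_i=\lceil n\ell_{\alpha_i}(X)\rceil$ when $\ell_{\alpha_i}(X)\le 1$, so that $k_i/n\in[\ell_{\alpha_i},\ell_{\alpha_i}+1/n]$. The ratio $k_i/n$ to $\ell_{\alpha_i}$ is then at most $1+\frac{1/n}{1/(5n)}=6$ and at least $1$.

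The difficulty is that model lengths are capped at $1$ while Bers curves may have length up to $B_g$. For curves with $\ell_{\alpha_i}(X)\in(1,B_g]$ I set $k_i=n$, giving length $1$; the length ratio is then at most $B_g$. Either way, the two surfaces share a pants decomposition with all cuffs bounded by $\max(B_g,1)$ and all length ratios bounded by $\max(6,B_g)$.

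The remaining issue is the twists: the model has twist zero, while $X$ has arbitrary twists. I would first apply Dehn twists about the $\alpha_i$; these are mapping classes, so they do not change the point in $\M_g$. This reduces each twist to at most one full turn, $|\tau_i|\le \ell_{\alpha_i}$. Then, on each pair of pants, I use Lemma \ref{lem:bishop} to get quasiconformal maps that are affine on the boundary geodesics. I glue these maps together after precomposing with a fractional twist, which costs a bounded quasiconformal factor because a twist of at most one turn can be absorbed in the collar. The collar has width bounded below, since the cuffs are short or bounded by $B_g$. Proposition \ref{prop:bishop} essentially packages this gluing, yielding a constant $Q_g$ depending only on $B_g$, that is only on $g$.

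I expect the main obstacle to be exactly this twist-absorption and gluing step, especially when a cuff length is near $B_g$. The cleanest route seems to be invoking Proposition \ref{prop:bishop} directly, after the Dehn-twist normalization. For the boundary version of (5), I apply the same argument on each component of the complement of the nodes, using $k_i=0$ at the nodes. For (6), a model metric in $\model^{n-1}$ has lengths $k/(n-1)$; replacing these by $k'/n$ with $k'=\lceil nk/(n-1)\rceil\le n$, or $k'=0$ when $k=0$, gives length ratios at most $2$ with identical zero twists. Proposition \ref{prop:bishop} then yields a constant that can be absorbed into $Q_g$.
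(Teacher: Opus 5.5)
Your proposal is correct and follows essentially the paper's proof. It uses the dual trivalent graph for (3), rounds each Bers length up to the next multiple of $1/n$ (capped at $1$) for (5), and then combines Proposition~\ref{prop:bishop} with bounded-dilatation twists along the $\alpha_i$; your explicit Dehn-twist normalization to at most one full turn makes precise a step the paper leaves implicit when it says the twist dilatations depend only on $B_g$. The only deviation is (6): instead of re-rounding, the paper notes that $\model^{n-1}$ lies in the $\frac{1}{5(n-1)}$-thick part (including its boundary), hence in the $\frac{1}{5n}$-thick part, and applies (5) directly, which yields the constant $Q_g$ itself with nothing to absorb.
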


\begin{proof} To prove item (1) need to show that the systole of $Z^{P}_{\vec{k}}$ is $\geq \frac{1}{5n}$. Suppose not. 
Since the pants curves are bounded from above $B_{g}$, the systole cannot cross a pants curve and have length less than $\frac{1}{5n}$. This only leaves that the systole is a pants curve but by assumption all the pants curves have length a positive integer multiple of $\frac{1}{n}$. Thus it must be that the systole is bigger than $\frac{1}{5n}$. Item (2) follows from the definitions. 

To prove item (3), we first note that a pants decomposition can be identified 
with a $3$-regular graph with $2g-2$ vertices by putting a vertex for each pair of pants and connecting two vertices with an edge if they share a pants curve. 
Moreover, this correspondence uniquely identifies the mapping class orbit of the 
pants decomposition. Hence, the number of pants decomposition orbits is the number of trivalent graphs, $t_g$.
 
 Item (4) follows from the fact that there are $t_g$ pants decompositions up to an element of the mapping class group, and the number of 
 possible values for the length of any pants curve is $(n+1)^{3g-3}$.

To prove item (5), note that by Theorem \ref{thm:Bers} any surface $X \in \M_g^{1/5n}$ admits a pants decomposition
$P=\{\alpha_1,...,\alpha_{3g-3} \}$ with length upper bounded by $B_g$ and which contains all curves of length less than $2\arcsinh(1)$. We need to find an appropriate model metric with respect to $P$. This of course comes down 
to an appropriate choice of the vector $\vec{k}$.

Now if $\ell_{\alpha_{i}}(X) \in I$, where $I$ is one of the following intervals
\begin{equation*}
 \left[ \frac{1}{5n},\frac{1}{n} \right], \left(\frac{1}{n}, \frac{2}{n}\right],...,
\left(\frac{k-1}{n}, \frac{k}{n}\right],..., \left(\frac{n-1}{n}, 1 \right].
\end{equation*}
then choose $k_{i}$ so that $\frac{k_{i}}{n}$ is the biggest value in $I$.
Otherwise, $I=[1, B_g]$ and we set $k_{i}=n$ so that the model metric length coordinate is
$\ell_{\alpha_{i}}(Z^{P}_{\vec{k}})=1$.
 Setting all the twist parameters to be zero, this defines a model metric $Z^{P}_{\vec{k}}$. Now,
if $X_0$ has the same length parameters as $X$ but with zero twists, then the pants lengths from 
$Z^{P}_{\vec{k}}$ to $X_{0}$ are multiplied by at most 
$\text{max}\{B_g, 5\}$. Moreover, going from $X_0$ to $X$ involves 
twisting along each $\alpha_i$. Noting that the twists are locally supported about their respective curves, and that this mapping can be realized by a Nielsen twist about each curve with dilatation $K_i$,
for $i=1,...,3g-3$, we can conclude there is a quasiconformal mapping from $X_0$ to $X$ with dilatation bounded 
by the maximum of $\{K_1,...,K_{3g-3}\}$. The dilatations $K_i$ only depend on the upper bound of the length of the curve which is $B_g.$ See \cite{Matsuzaki2007} for a proof of this fact. 
Thus, by a theorem of Bishop (Proposition \ref{prop:bishop}) the map from $Z^{P}_{\vec{k}}$ to $X$ has dilatation at most 
$Q_g=f(\text{max}\{B_g, 5\}, B_g)$. The double function is because 
$\text{max}\{B_g, 5\}$ is  the upper bound on the length ratio and $B_g$ is the  upper bound on the pants lengths. Thus, the dilatation of the map from $Z^{P}_{\vec{k}}$ to $X$ depends only on the genus $g$. The argument for $X$ on the boundary of moduli space is similar and left to the reader. 

Finally, item (6) follows from item (\ref{item: qc constant}) and the fact that a model metric in $\model^{n-1}$ lies in the moduli space 
$\M_g^{1/5(n-1)} \subset \M_g^{1/5n}$.
\end{proof}

\begin{remark}
We remark that upper bounds as well as the asymptotic growth rate of $t_g$, as $g \rightarrow \infty$, are known. See Bollab\'as \cite{Bollobas1982}.
 
\end{remark}

\section{Proof of the inf spectrum growth}\label{sec:inf}

In this section, we prove Theorem \ref{theoremA}.

\begin{proof}[Proof of Theorem \ref{theoremA}]
For the lower bound, take any topologically embedded pair of pants $P \subset \Sigma$, and consider the set of closed curves on $P$. Note that the inf length of any curve in $P$ is realized on the ideal pair of pants and hence the set of inf lengths of these curves by Huber's theorem grow like $\frac{e^{L}}{2L}$. Since the (orientation preserving)  mapping class group of $P$ is of order $6$, we can conclude that the growth rate of these curves is at least 
$\frac{1}{6} (\frac{e^{L}}{2L})$. 
Now, if $P^{\prime}$ is a pair of pants not in the $\MCG_{g}$-orbit of $P$, then by 
 Lemma \ref{lem:mcgorbits}, item (2), the closed curves on $P^{\prime}$ are distinct from the closed curves on $P$. Letting $A_g$ be the number of $\MCG_{g}$-orbits of punctured tori (there is only one)
and pants in $\Sigma$, we may conclude 
 that 

\begin{equation*}
 \# \bigl\{[\gamma] \mid m_{[\gamma]} \leq L \bigr\}\geq A_g \frac{e^{L}}{2L} 
\end{equation*}

for $L \geq L_{0}$. This establishes the lower bound. 

For the proof of the upper bound, recall that $Y_{\gamma}$ denotes the thickened surface associated to the optimal metric, $X_{\gamma}$, with 
$\sys (Y_{\gamma}) \geq \frac{1}{5 i(\gamma, \gamma)}$. We use the fact that the number of self-intersections of a closed geodesic of length at most $L$ is universally bounded by $2e^{2L}$ (see \cite{basmajian2013universal}). Setting $n= 2e^{2L}$ we have 
\begin{equation} \label{eq: equality}
\{[\gamma]\mid \ell_{\gamma}(Y_{\gamma}) \leq L \} = \bigcup_{i(\gamma, \gamma)=1}^{n} \{[\gamma]\mid \ell_{\gamma}(Y_{\gamma})\leq L\} 
\end{equation}
\begin{equation}\label{eq: containment}
\subset \bigcup_{i(\gamma , \gamma)=1}^{n} \bigcup_{Z \in \model^{n}} \{[\gamma]\mid \frac{1}{Q_{g}} \ell_{\gamma}(Z) \leq L \} 
\end{equation}
 Line (\ref{eq: containment}) follows from Lemma \ref{lem: model metric properties} item (\ref{item: qc constant}), which guarantees that for any $Y_{\gamma}$ there is a model metric in $\model^{n}$ and a $Q_{g}$- quasiconformal mapping between them. 
Now, using the above containment we have

\begin{equation}\label{eq: equality2}
\#\{[\gamma]\mid \ell_{\gamma}(Y_{\gamma}) \leq L \}
= \sum_{i(\gamma, \gamma)=1}^{n} \#\{[\gamma]\mid \ell_{\gamma}(Y_{\gamma})\leq L\}
\end{equation}

\begin{align}
\label{eq: buser lemma}
\leq{}& \sum_{i(\gamma, \gamma)=1}^{n} \sum_{Z \in \model^{n}}
\#\{[\gamma]\mid \ell_{\gamma}(Z_{\gamma})\leq Q_g L\}
\leq \sum_{i(\gamma, \gamma)=1}^{n} \sum_{Z \in \model^{n}} a e^{Q_g L}
\\
\label{eq:numbermodelmetrics}
\leq{}& a e^{Q_g L} \sum_{i(\gamma, \gamma)=1}^{n} \sum_{Z \in \model^{n}} 1
= a ne^{Q_g L} \, \#\{Z \in \model^{n}\}
= a ne^{Q_g L} t_{g} (n+1)^{3g-3}
\\
\label{eq: final inequality}
={}& 2a t_{g} e^{(2+Q_g)L} 2^{3g-3}e^{(6g-6)L}
\left(1+\frac{1}{2e^{2L}}\right)^{3g-3}
\leq 2^{6g-5} a t_{g} e^{(6g-4+Q_g)L}
\end{align}

where the right-hand side of inequality (\ref{eq: buser lemma}) with constant 
$a=(g-1)e^{6}$ follows from Lemma 6.6.4 of Buser \cite{BuserBook}, in line (\ref{eq:numbermodelmetrics}) we used Lemma \ref{lem: model metric properties} item (\ref{item: modelmetricnumber}), and in line 
(\ref{eq: final inequality}) we used $n=2e^{2L}$, and in that same line the equality follows from using the fact that $1+\frac{1}{2e^{2L}} \leq 2$.

Finally, recalling Proposition \ref{prop: thickened metric} and plugging into line (\ref{eq: final inequality}) with $a=(g-1)e^{6}$ we have

\begin{equation}
\#\{[\gamma] \mid \ell_{\gamma}(X_{\gamma}) \leq L\} \leq \#\{[\gamma] \mid \ell_{\gamma}(Y_{\gamma}) \leq cL\} 
\end{equation}
\begin{equation*}
\leq (g-1)e^6 t_g 2^{6g-5} e^{(6g-4+Q_g)cL}
\end{equation*}
 
Finally, setting $B=(g-1)e^6 t_g 2^{6g-5}$ and $C=(6g-4+Q_g)c$, and noting that $B$
and $C$ only depend on topology yields the desired result. 
\end{proof}

\section{Counting embedded pairs of pants and figure 8 curves}\label{section:counting figure 8's}

In this section, we count embedded pairs of pants and figure 8 curves allowing us to complete the topological bounds on the constants $A,B$ and $C$ in Theorem 
\ref{theoremA}.

\subsection{Counting} 
Let $\Sigma=\Sigma_g$ be a closed surface of genus $g \geq 2$.
For the purpose of the next lemma, by an embedded compact pair of pants $P \subset S$ we mean a subsurface homeomorphic to a sphere with 3 open discs removed. 
A figure $8$ is a closed curve with 1 self-intersection. Associated to each figure $8$ on 
$\Sigma$ is a unique pair of pants with distinguished boundary curve. The distinguished boundary curve is the only boundary curve of the pants which is not enclosed by one of the loops of the figure $8$ (see Figure \ref{fig:figure8}).

\begin{figure}[h]
 \centering
 \begin{overpic}[trim={3cm 8cm 3cm 7cm}, clip, width=0.55\linewidth]{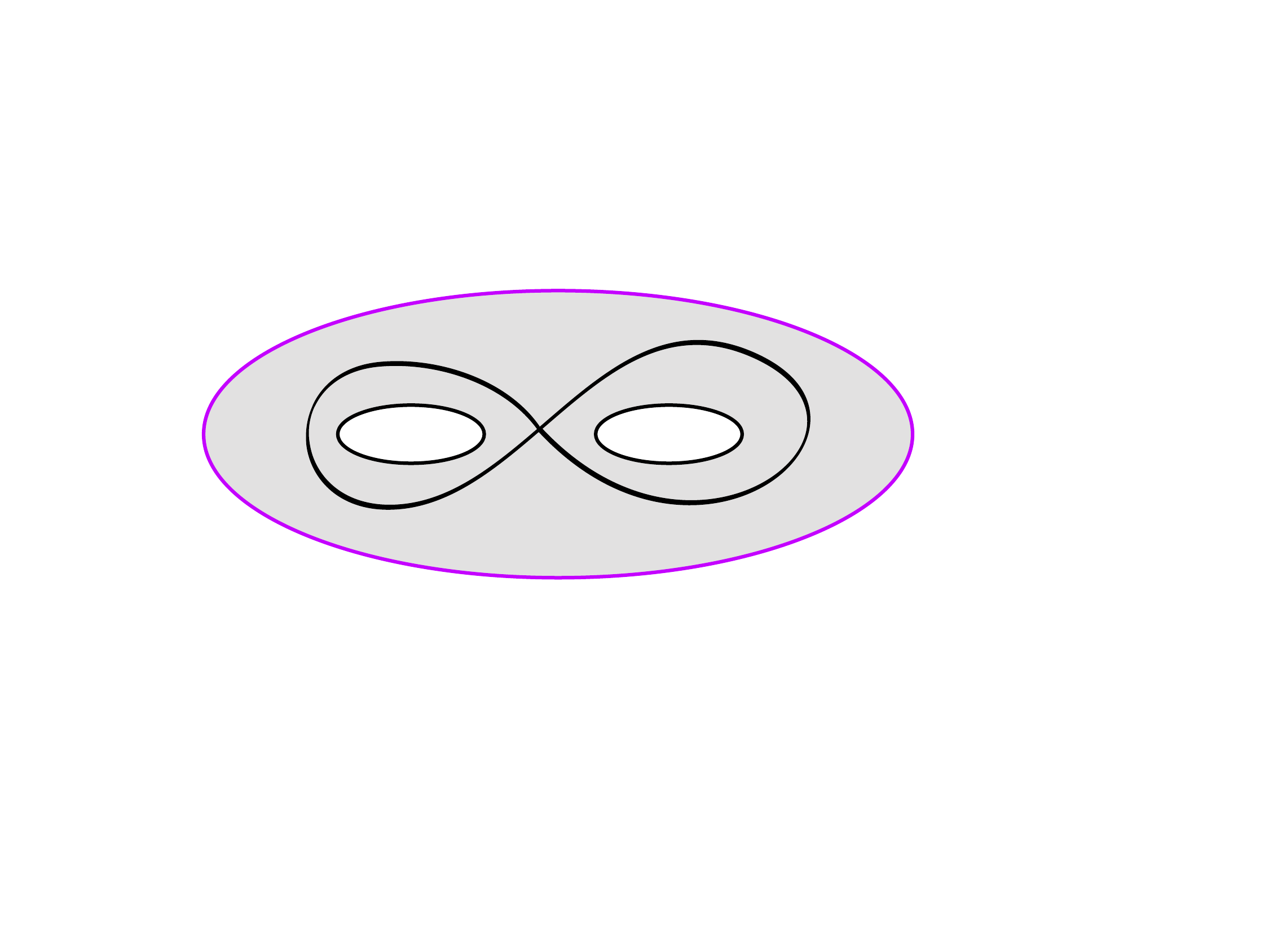}
 \put(72,13){$a$}
 \end{overpic}
 \caption{Figure 8 curve on a pair of pants with a distinguished boundary curve $a$ }
 \label{fig:figure8}
\end{figure}

\begin{lemma} \label{lem:mcgorbits}
 Suppose $\Sigma$ is a closed surface of genus $g \geq 2$. Then

\begin{enumerate}
 \item there is only one type of embedded punctured torus in $\Sigma.$
 Moreover, there is only one type of figure $8$ which is contained in the punctured torus. 
 \item Two essential closed curves in embedded pairs of pants, $\gamma \subset P$ and $\gamma^{\prime} \subset P^{\prime}$, are in the same orbit if and only if $P$ and $P^{\prime}$ are in the same orbit. 
 \end{enumerate}
\end{lemma}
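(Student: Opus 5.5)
For item (1) I would use the classification of surfaces together with the change-of-coordinates principle. Let $T$ be an embedded one-holed torus in $\Sigma$ with boundary curve $\delta$. Then $\delta$ is separating, and $\Sigma\setminus T$ is a connected surface of genus $g-1$ with one boundary component. If $T'$ is another such torus, the complements $\Sigma\setminus T$ and $\Sigma\setminus T'$ are homeomorphic, and so are $T$ and $T'$. I would pick homeomorphisms $T\to T'$ and $\Sigma\setminus T\to\Sigma\setminus T'$, adjust them so that they agree (and preserve orientation) along $\delta$, and glue them. This gives an element of $\MCG_g$ carrying $T$ to $T'$.

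For the figure $8$ statement, I would use the fact that a figure $8$ $\gamma\subset T$ fills a pair of pants $P_\gamma\subset T$ (a regular neighbourhood of $\gamma$). The distinguished boundary of $P_\gamma$ is $\delta$, and the other two boundary curves of $P_\gamma$ are both isotopic to a single nonseparating simple curve $a$ in $T$. So $\gamma$ is determined up to isotopy by $a$, namely it is the curve $a\cdot\delta$-type word. Since $\MCG(T)$ acts transitively on nonseparating simple closed curves of $T$ (via $\mathrm{SL}(2,\Z)$ acting on primitive vectors), all figure $8$s in $T$ lie in a single $\MCG(T)$-orbit. Extending by the identity outside $T$ puts them in a single $\MCG_g$-orbit.

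For item (2), the direction ``$P\sim P'$ implies the curves are in the same orbit'' needs to be read correctly. If $\phi(P)=P'$, then $\phi(\gamma)$ is a closed curve in $P'$, and the orbit of $\gamma$ corresponds to an orbit of curves on $P'$ up to $\MCG(P')$ (which contains the order-$6$ boundary permutations used in Theorem \ref{theoremA}). The substantive direction is the converse. Suppose $\phi(\gamma)=\gamma'$ up to homotopy, with $\gamma$ essential, non-peripheral and filling $P$. I would recover $P$ intrinsically from $\gamma$ as the subsurface filled by $\gamma$. Concretely, take a regular neighbourhood $N(\gamma)$ of a minimal-position representative and add every complementary disc and once-punctured disc. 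For a non-simple curve in a pair of pants this filled subsurface is exactly $P$ up to isotopy, since $\gamma$ fills $P$ and each boundary curve of $P$ is essential in $\Sigma$. The filled subsurface is natural under homeomorphisms and is an isotopy invariant by uniqueness of minimal position (Hass--Scott). It follows that $\phi(P)$ is isotopic to $P'$.

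The hard step is the degenerate cases. Two boundary curves of $P$ might be isotopic in $\Sigma$, which happens when $P$ sits inside a one-holed torus; then the filled subsurface is the torus rather than $P$. Also, $\gamma$ might be simple, i.e.\ a boundary curve. I would handle these as follows. I would restrict to non-simple $\gamma$, because simple curves all have inf length $0$ and are counted separately. I would treat the punctured-torus type as its own orbit class, consistent with how $A_g$ counts ``punctured tori and pants.'' With those conventions, the filling-subsurface argument identifies the $\MCG_g$-orbit of the pair $(P,\gamma)$.
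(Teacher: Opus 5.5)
\textbf{There is a genuine gap in the second half of item (1).} The failing step is the sentence ``the distinguished boundary of $P_\gamma$ is $\delta$.'' It is true that the boundary curves of $P_\gamma$ are, up to isotopy in $T$, two copies $a^{+},a^{-}$ of a nonseparating curve $a$ together with $\delta$, so $P_\gamma$ is $T$ cut along $a$. But nothing forces $\delta$ to be the distinguished one.

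Take $\pi_1(T)=\langle a,b\rangle$ and $\delta=aba^{-1}b^{-1}$. The pants $T\setminus a$ carries three figure $8$s:
\begin{itemize}
\item $abab^{-1}$, whose two loops go around $a^{+}$ and $a^{-}$, with distinguished boundary $\delta$;
\item two curves with one loop around a copy of $a$ and the other around $\delta$, such as $a\delta=a^2ba^{-1}b^{-1}$, whose distinguished boundary is the other copy of $a$.
\end{itemize}
Your own description of $\gamma$ as ``the $a\cdot\delta$-type word'' is of the second kind, not the first.

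The two kinds are not equivalent. The curve $abab^{-1}$ has homology class $2[a]$. The curve $a\delta$ has class $[a]$, which is primitive in $H_1(\Sigma;\Z)$ because $a$ is nonseparating. Since $\MCG_g$ acts on $H_1(\Sigma;\Z)$ by automorphisms, divisibility is preserved. The two curves with distinguished boundary $a^{\pm}$ are exchanged by the elliptic involution of $T$, which preserves $a$ and swaps its sides, so there are exactly two orbits.

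Hence the claim ``only one type of figure $8$ in the punctured torus'' is false as stated, and this step cannot be repaired. What your argument does prove is that the figure $8$s in $T$ with distinguished boundary $\delta$ form one orbit. The paper's one-line proof makes the same slip: a mapping class preserving $a$ can swap $a^{+}$ and $a^{-}$, but can never carry $\delta=\partial T$ to either of them. The correct statement is that $T$ contains exactly two $\MCG_g$-orbits of figure $8$s. Incidentally, this raises the figure-$8$ count in Proposition \ref{enum: counting} by one. Your proof of the first half of item (1) is the same as the paper's and is fine.

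\textbf{Item (2).} Your route is more careful than the paper's, which only says that $\gamma$ ``deformation retracts onto a figure $8$'' and that figure $8$s ``track with'' their pants. You rightly read the statement as the ``only if'' direction for non-simple curves. The ``if'' direction is false, and for simple $\gamma$ even ``only if'' fails, since a nonseparating curve bounds pants of several types. Recovering $P$ as the subsurface filled by a minimal-position representative is exactly what makes ``figure $8$s track with the pants'' rigorous: it is natural under homeomorphisms and well defined up to isotopy by Hass--Scott.

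One correction: with your definition (regular neighbourhood plus complementary discs), the filled subsurface of a curve in a pants $P\subset T$ with two boundaries parallel to $a$ is still $P$, not $T$. The region between the two copies of $a$ is an annulus rather than a disc. So the degenerate case needs no special convention. If you do add annuli, uniqueness of the torus type handles it.
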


\begin{proof} The fact that there is only one type of punctured torus follows from the fact that the complement of the punctured torus 
must have genus $g-1$ with one boundary component. After cutting along the nonseparating curve of the torus what remains is an embedded pair of pants which has only one figure $8$ up to an element of the mapping class group that keeps the nonseparating curve of the torus invariant. This verifies item (1).

 To prove item (2), note that the curve $\gamma$ deformation retracts onto a figure $8$ in $P$. Since figure $8$s track with the pants they are in we have a similar phenomenon for closed curves in pants. 
\end{proof}

\begin{table}[htbp]
	\centering
	\begin{tabular}{|c|c|c|}
		\hline \hline 
		
	{\bf Complementary components} & {\bf Embedded pants types} & {\bf Figure $8$ types} \\
		\hline
		\hline
 1 & 1 & 2 \\
 \hline
 2 & $(g-2)$ & $2(g-2)$ \\
 \hline
 3 & $\floor*{\frac{g^2+3}{12}}$ & $\floor*{\frac{(g-1)^2}{4}}$ \\
 \hline
 \hline
	\end{tabular} 
	\caption{Counting types} \label{table:2}
\end{table}

\begin{proposition}\label{enum: counting}
Consider the closed surface $\Sigma$ of genus $g \geq 2$. Then
\begin{enumerate} 
 \item the number of $\MCG_{g}$-orbits of embedded pairs of pants in $\Sigma$ is 
\begin{equation}
 (g-1)+ \floor*{\frac{g^2+3}{12}}
\end{equation}
 
 \item 
 the number of $\MCG_{g}$-orbits of figure $8$ curves in $\Sigma$ is 
 \begin{equation}
 2g-2+\floor*{\frac{(g-1)^2}{4}}
 \end{equation}
 
\end{enumerate}
\end{proposition}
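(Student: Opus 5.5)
The plan is to classify embedded pairs of pants $P\subset\Sigma$ (with three distinct, pairwise non-isotopic essential boundary curves) by the topology of $\Sigma\setminus P$. By the classification of surfaces, two such pants lie in the same $\MCG_g$-orbit exactly when there is a homeomorphism of the complements that respects how the complementary components are attached to the cuffs of $P$. An Euler characteristic count does the bookkeeping: $\chi(\Sigma)=2-2g=\chi(P)+\chi(\Sigma\setminus P)=-1+\chi(\Sigma\setminus P)$. This fixes the total genus of the complement in each case, and it organises the count exactly as in Table \ref{table:2}.

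\emph{One complementary component.} The complement has three boundary curves and genus $g-2$, so it is unique up to homeomorphism. This gives one orbit of pants.

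\emph{Two components.} One component $C_1$ carries two cuffs and has genus $a$; the other, $C_2$, carries one cuff and has genus $b$. The Euler characteristic gives $a+b=g-1$. We need $b\geq 1$, since otherwise the cuff bounds a disc. We also need $a\geq 1$, since $a=0$ makes $C_1$ an annulus, so the two cuffs would be isotopic and we would be in the one-holed torus situation, which is counted separately. The unordered nature is automatic here because the two components play different roles, so there are $g-2$ orbits.

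\emph{Three components.} Each component has one boundary curve and genus $\geq 1$, and the genera sum to $g$. Orbits correspond to unordered partitions of $g$ into three positive parts. The classical count of these is the nearest integer to $g^2/12$, which equals $\floor*{\frac{g^2+3}{12}}$. Summing the three cases gives $1+(g-2)+\floor*{\frac{g^2+3}{12}}$, which is item (1).

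For item (2), I use the bijection stated before Lemma \ref{lem:mcgorbits}: a figure $8$ corresponds to a pair of pants together with a distinguished boundary curve. Orbits of figure $8$s are therefore orbits of pants with a marked cuff, and I count, case by case, the orbits of cuffs under the stabiliser of $P$ in $\MCG_g$.

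In the three-component case, I count partitions $g=a+b+c$ with $a$ the distinguished genus and $\{b,c\}$ unordered. This gives $\sum_{a=1}^{g-2}\floor*{\frac{g-a}{2}}$, and an elementary summation (splitting by parity of $g$) gives $\floor*{\frac{(g-1)^2}{4}}$.

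In the two-component case, the distinguished cuff is either the one bounding $C_2$ or one of the two bounding $C_1$. The latter two are exchanged by a homeomorphism of $C_1$, so there are $2(g-2)$ orbits.

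The first row should contribute $2$. In the connected-complement case all three cuffs are permuted by the stabiliser, which gives one figure $8$ type. The remaining one comes from the figure $8$ in the embedded one-holed torus (Lemma \ref{lem:mcgorbits}, item (1)), whose underlying pants has two cuffs glued to each other. Adding the rows then gives $2+2(g-2)+\floor*{\frac{(g-1)^2}{4}}=2g-2+\floor*{\frac{(g-1)^2}{4}}$.

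I expect the main obstacle to be the careful justification of the orbit claims, namely that the marked-cuff orbits are exactly as stated. This requires constructing homeomorphisms of the complements that realise the permutations of cuffs and extend over $P$. I also need to make sure the degenerate configurations (glued cuffs, annular components) are allocated consistently between the torus count and the first row. The partition-counting identities are routine.
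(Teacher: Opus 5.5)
Your count for item (1) is correct and matches the paper's proof. So are the two- and three-component rows for item (2). The gap is in the first row for item (2).

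You let the one-holed torus contribute a single figure $8$ by citing Lemma~\ref{lem:mcgorbits}(1), but that claim is false, and your own marked-cuff method shows it. Let $T\subset\Sigma$ be the one-holed torus with boundary $\delta$ and $x\subset T$ a nonseparating curve. Then $T\setminus x$ is a pair of pants with cuffs $x_+,x_-,\delta$. Its stabiliser can exchange $x_+$ and $x_-$. It cannot carry $\delta$, which is separating in $\Sigma$, to $x_\pm$, which is nonseparating.

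So there are two marked-cuff orbits:
\begin{itemize}
\item $\gamma_1$, with loops around $x_+$ and $x_-$ and distinguished cuff $\delta$;
\item $\gamma_2$, with loops around $x_+$ and $\delta$ and distinguished cuff $x_-$.
\end{itemize}
Both are realised as figure-$8$ geodesics of the pants $T\setminus x$ in any hyperbolic metric. They are therefore primitive with self-intersection number exactly $1$. They are not $\MCG_g$-equivalent, for two independent reasons. First, the distinguished cuff of the regular neighbourhood is canonically attached to a figure $8$, and it is separating for $\gamma_1$ but nonseparating for $\gamma_2$. Second, in $H_1(\Sigma;\Z)$ one has $[\gamma_1]=\pm 2[x]$ while $[\gamma_2]=\pm[x]$, and divisibility is preserved by $\MCG_g$.

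Consequently the first row contributes $3$, not $2$. Equivalently, the one-holed torus is just the case $a=0$ (annular $C_1$) of your two-component row, and it contributes $2$ like every other configuration there. The count your argument actually yields is $2g-1+\lfloor (g-1)^2/4\rfloor$. For $g=2$ there are three orbits: the one whose neighbourhood has connected complement, $\gamma_1$, and $\gamma_2$. The stated formula gives $2$.

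So item (2) cannot be proved as stated; it undercounts by one. The paper's proof has exactly the same defect, since it also relies on Lemma~\ref{lem:mcgorbits}(1). That lemma's proof overlooks that $\delta$ and $x_\pm$ are inequivalent cuffs of $T\setminus x$.
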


\begin{proof}
Either count involves considering an embedded pair of pants $P$ in $\Sigma$. Such a pair of pants has either 1, 2, or 3 complementary components $\Sigma\backslash P$. We count the number of such complementary components in each case. 

\noindent{\bf 1 complementary component:} 
In this case, an embedded pair of pants has complement a genus $g-2$ surface with 3 boundary components. See Figure \ref{fig:1 component}. Hence, there is 1 embedded pair of pants. Next to count figure 8's note that either the figure 8 is in a punctured torus or an embedded pair of pants. In the case of a punctured torus by Lemma \ref{lem:mcgorbits} item (1) there is a single figure 8. In the case of an embedded pair
of pants $P$, since any of the boundary curves of $P$ are equivalent, there is only 1 figure $8$ curve in an embedded pair of pants with one complementary component. We may conclude there are exactly 2 figure 8's in this case.

\begin{figure}[h]
 \centering
 \begin{overpic}[trim={2cm 7cm 8cm 6cm}, clip, width=0.65\linewidth]{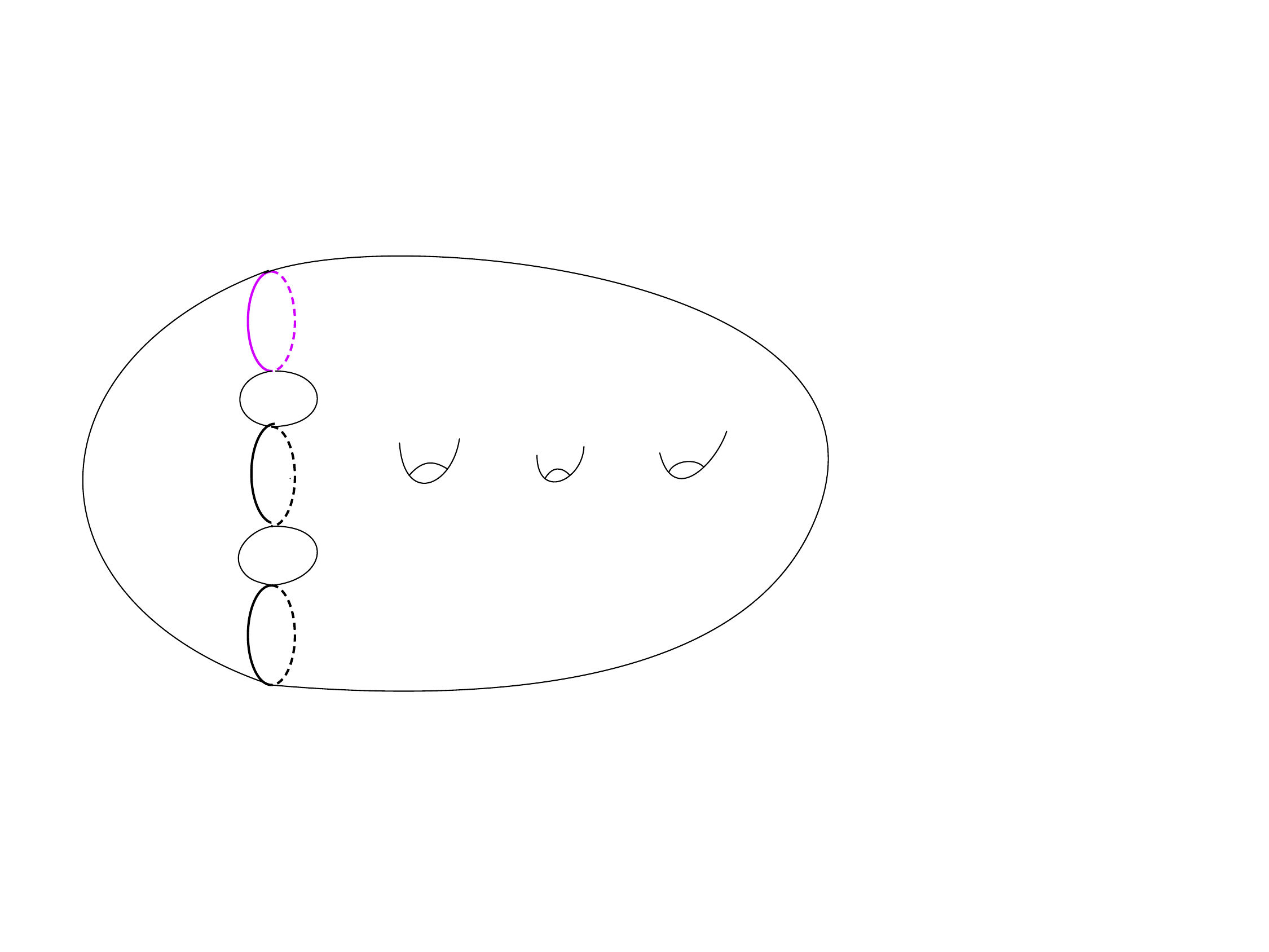}
 \put(25,41){$a$}
 \end{overpic}
 \caption{Embedded pair of pants with 1 complementary component and distinguished boundary $a$. }
 \label{fig:1 component}
\end{figure}

\noindent{\bf 2 complementary components:} In this case, one of the components has a single boundary curve, and the other complementary component has 2 boundary curves. See Figure \ref{fig:2 component both}. Denote the genus of the complementary component with one boundary curve by $a$, and denote the genus of the other complementary component as $g_1$. Now $a+g_1=g-1$, where $a \geq 1$ and $g_1 \geq 1.$ There are clearly $(g-2)$ possible configurations, and hence the number of embedded pairs of pants is 
$g-2$. 

To count the figure $8's$ there are two possibilities for the distinguished boundary component. Either it bounds the component with genus $a$ which has one boundary component or it bounds the component with genus $g_1$ with two boundary components. In the first case, there are 
$(g-2)$ such configurations. In the second case, there are also $(g-2)$ such possibilities. Hence, the total number of figure $8's$ is $2(g-2)$.

\begin{figure}[h]
 \centering
 \begin{subfigure}[b]{0.45\linewidth}
 \centering
 \begin{overpic}[trim={2cm 7cm 8cm 6cm}, clip, width=\linewidth]{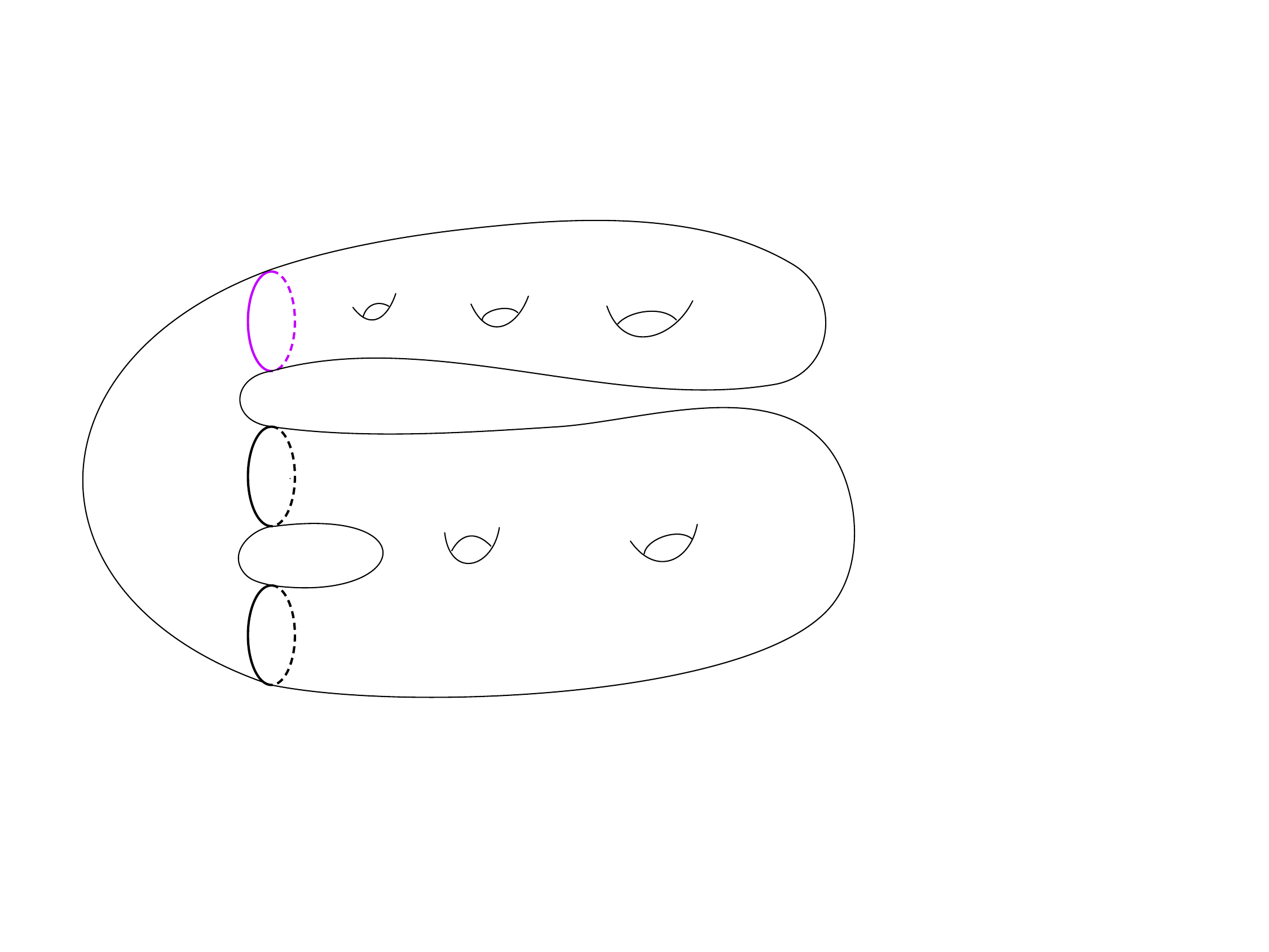}
 \put(25,41){$a$}
 \end{overpic}
 \caption{}
 \label{fig:2 component}
 \end{subfigure}
 \hfill
 \begin{subfigure}[b]{0.45\linewidth}
 \centering
 \begin{overpic}[trim={2cm 7cm 8cm 6cm}, clip, width=\linewidth]{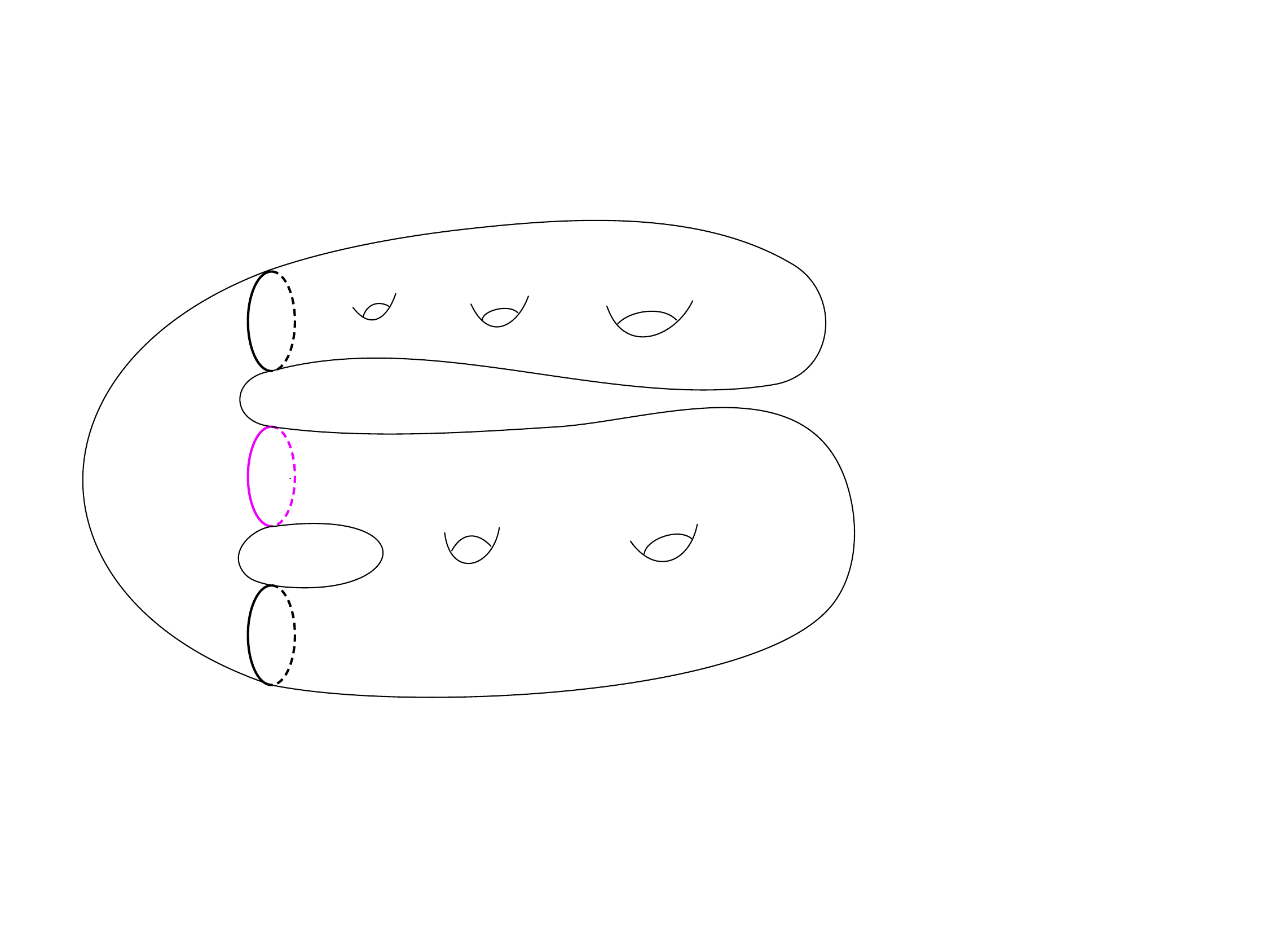}
 \put(25,41){$a$}
 \end{overpic}
 \caption{}
 \label{fig:2component2}
 \end{subfigure}
 \caption{Embedded pair of pants with 2 complementary components and distinguished boundary $a$.}
 \label{fig:2 component both}
\end{figure}

\noindent{\bf 3 complementary components:} Here we assume $g \geq 3$ since 3 complementary components is impossible if genus $g=2$. Each of these complementary components has positive genus and one boundary component.
See Figure \ref{fig:3 component}. Denoting these genera by $a,g_1$, and $g_2$ it is not difficult to see that the total number of embedded pants is equal to

$$
\# \{(a,g_1,g_2) | a+g_1+g_2=g, 1 \leq a \leq g_1 \leq g_2 \}
$$
but this is precisely the number of positive partitions of $g$ with exactly 3 parts which is
$$
\floor*{\frac{g^2+3}{12}}.
$$

To count the number of figure $8$ curves in this case, let $a$ be the genus with the distinguished boundary component corresponding to the figure $8$ on the pair of pants, and let 
$g_1$ and $g_2$ be the genera of the other two components. 
We know $a+g_1+g_2=g$, where $1 \leq a \leq g-2$.
Now for each $a$, note that $g_1$ and $g_2$ are interchangeable since they yield the same surface $\Sigma$ . Hence for each $a$ there are 
$$
\floor*{\frac{g-a}{2}}.
$$
Since $a$ runs between $1$ and $g-2$, we have that the total number of figure $8's$ in this case is 
\begin{equation}
 \sum_{a=1}^{g-2} \floor*{\frac{g-a}{2}}=\floor*{\frac{(g-1)^2}{4}}
\end{equation}

At this point, the proof, in any of the cases reduces to an elementary combinatorial exercise. 
We leave the count to the reader with Table \ref{table:2} providing the count for the various possible complementary components.

\end{proof}

\begin{figure}[h]
 \centering
 \begin{overpic}[trim={2cm 7cm 8cm 6cm}, clip, width=0.65\linewidth]{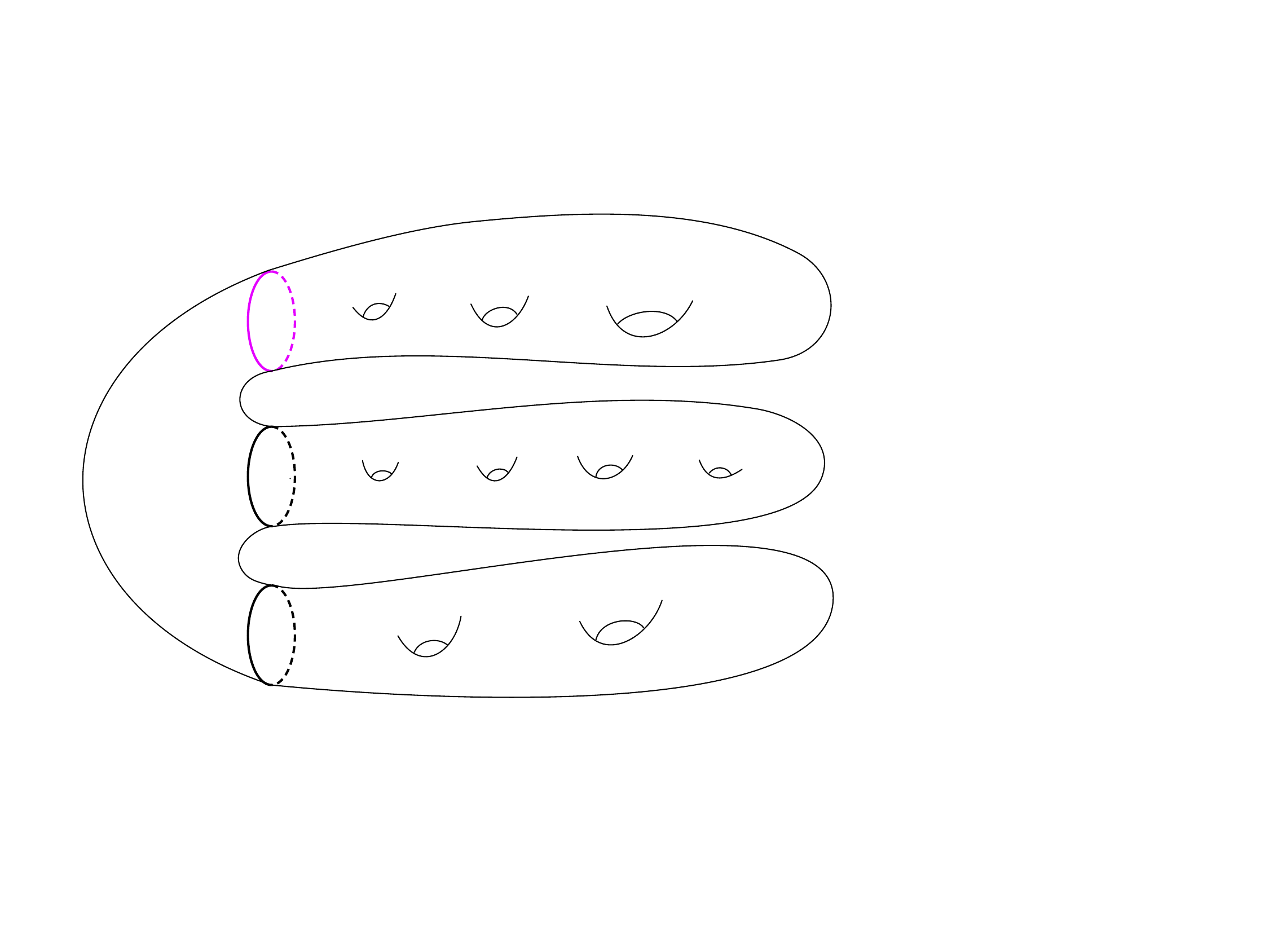}
 \put(25,41){$a$}
 \end{overpic}
 \caption{Embedded pair of pants with 3 complementary components and distinguished boundary $a$. }
 \label{fig:3 component}
\end{figure}

\section{Properties of the Inf Spectrum}\label{sec:propertiesofinfspec}

\begin{proposition}\label{prop: infspecproperties}
 Let $\Sigma$ be a closed surface of genus $g \geq 2$.
\begin{enumerate}

\item The first two numbers in the inf spectrum are $0$ and 
$2\arccosh \, 3$. 

\item The multiplicity of $0$ is the number of mapping class group orbits of simple curves which equals $\floor{\frac{g}{2}} +1$.

\item The multiplicity of 
$2\arccosh \,3$ is the number of mapping class group orbits of figure 8 curves which is
the increasing function 
\begin{equation}
 2g-2+\floor*{\frac{(g-1)^2}{4}}.
 \end{equation}
In particular, the inf spectrum determines the topology of $\Sigma$. 

\item If $\Sigma^{\prime}$ is a subsurface of $\Sigma$ then 
$\infspec(\M')$ injects into $\infspec(\M)$. 

\item The primitive length spectrum of an ideal pair of pants injects into $\infspec(\M)$. 
In particular:
$$
\{2\arccosh (2k+1)\}_{k=1}^{\infty} \hookrightarrow \infspec(\M).
$$
\end{enumerate}
\end{proposition}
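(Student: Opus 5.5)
The plan is to go through the five items in order, building on two facts: inf lengths are realized at designer metrics, possibly noded, and a non-simple curve has positive length on every (noded) hyperbolic surface.

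\textbf{Item (2).} A simple curve can be pinched to a cusp on the boundary of $\overline{\M_g}$, so $m_\gamma=0$. Conversely, a non-simple closed geodesic in a pair of pants or collar has length bounded below (for instance by the collar lemma), so $m_\gamma=0$ forces $\gamma$ to be simple. The $\MCG_g$-orbits of essential simple curves are:
\begin{itemize}
\item the nonseparating class;
\item the separating curves splitting off genus $h$, for $1\le h\le \floor{g/2}$.
\end{itemize}
This gives $\floor{g/2}+1$ orbits.

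\textbf{Item (1).} For the value $2\arccosh 3$ I would invoke the known lower bound (from \cite{BM}, or directly from trigonometry in pants) that a non-simple closed geodesic on any hyperbolic surface has length at least $2\arccosh 3$. Equality holds exactly for a figure $8$ on an ideal pair of pants, i.e.\ a thrice-punctured sphere. A figure $8$ lives in an embedded pants $P$, and pinching the three cuffs of $P$ shows its inf length is exactly $2\arccosh 3$, by a direct computation in the thrice-punctured sphere $\Gamma(2)\backslash\Hyp$. To see that nothing else attains this value, I would use the standard fact that any curve with $i(\gamma,\gamma)\ge 2$, or any non-figure-$8$ curve, has length strictly greater than $2\arccosh 3$ on every hyperbolic surface. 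Together with the uniqueness of the designer metric, this shows the second value is $2\arccosh 3$ and that it is attained only by figure $8$s. This is the step I expect to be the main obstacle: it needs a careful citation of the sharp bound, or else an argument that a curve with two or more self-intersections contains, after surgery, a shorter figure-$8$-type loop.

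\textbf{Item (3).} By item (1), the multiplicity of $2\arccosh 3$ equals the number of $\MCG_g$-orbits of figure $8$s. Proposition \ref{enum: counting}(2) gives this count as $2g-2+\floor{(g-1)^2/4}$. This function is strictly increasing in $g$, so the multiplicity determines $g$, and hence determines the topology of $\Sigma$.

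\textbf{Item (4).} Take a subsurface $\Sigma'\subset\Sigma$ and a curve $\gamma\subset\Sigma'$. I would show $m_\gamma(\Sigma)=m_\gamma(\Sigma')$, where the boundary of $\Sigma'$ is realized as cusps by Remark \ref{rem:embedding}. For $\le$, pinch $\partial\Sigma'$ in $\overline{\M_g}$. For $\ge$, any metric on $\Sigma$ restricts to the convex core of $\Sigma'$ with geodesic boundary, and shrinking boundary lengths to zero does not increase the lengths of interior curves (Thurston/Parlier length-decreasing deformations). For injectivity on orbits, I would argue as in Lemma \ref{lem:mcgorbits}(2): curves filling $\Sigma'$ lie in distinct $\MCG(\Sigma')$-orbits exactly when they lie in distinct $\MCG_g$-orbits, up to the finite symmetry of the embedding.

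\textbf{Item (5).} This is item (4) applied with $\Sigma'=P$ a pair of pants, using Lemma \ref{lem:mcgorbits}(2). The explicit sequence $\{2\arccosh(2k+1)\}$ comes from the curves $ab^k$ in the thrice-punctured sphere. Their traces are computed from the parabolic generators
$\begin{pmatrix}1&2\\0&1\end{pmatrix}$ and $\begin{pmatrix}1&0\\2&1\end{pmatrix}$,
and the product trace $2+4k$ gives length $2\arccosh(2k+1)$.
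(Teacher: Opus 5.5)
Your treatment of items (1), (2), (4) and (5) is the paper's argument with more detail. Simple curves pinch to length $0$, Buser's bound $4\arcsinh 1=2\arccosh\,3$ holds for non-simple geodesics, figure eights realize it on the thrice-punctured sphere, and a curve in a subsurface has the same inf length there as in $\Sigma$. You are more careful than the paper in noticing that item (3) also needs that no non-figure-eight curve has inf length exactly $2\arccosh\,3$; the paper's proof does not address this.

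What makes your argument work there is that the infimum is \emph{attained} on a noded surface, together with the strict inequality on cusped surfaces. Uniqueness of the designer metric is irrelevant. Strict inequality on closed surfaces alone would prove nothing, since every figure eight is itself strictly longer than $2\arccosh\,3$ on every closed surface. The orbit-injectivity discussion in item (4) is not needed: the paper only proves an injection of values, and by its remark on changing multiplicities only means that.

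The real problem is item (3), which you, like the paper, reduce to Proposition \ref{enum: counting}(2). That count rests on Lemma \ref{lem:mcgorbits}(1) and appears to miss an orbit. Let $T\subset\Sigma$ be a one-holed torus with boundary $\delta$ and $\alpha\subset T$ nonseparating, so $T\setminus\alpha$ is a pair of pants with cuffs $\alpha_+,\alpha_-,\delta$. Write $\pi_1(T)=\langle a,b\rangle$ with $a$ representing $\alpha$ and $\delta=[a,b]$. Then:
\begin{itemize}
\item $abab^{-1}$ is a figure eight with loops around $\alpha_+,\alpha_-$ and distinguished cuff $\delta$;
\item $a^2ba^{-1}b^{-1}$ is a figure eight with loops around $\alpha_+,\delta$ and distinguished cuff $\alpha_-$.
\end{itemize}
The first has homology class $2[\alpha]$ and a separating distinguished cuff. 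The second has class $[\alpha]$ and a nonseparating distinguished cuff. Hence they lie in different $\MCG_g$-orbits, yet both have inf length $2\arccosh\,3$ (pinch $\alpha$ and $\delta$).

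So the punctured torus carries two figure-eight types, not one. Equivalently, the case $g_1=0$ must be included in the two-component count, with both choices of distinguished cuff. The multiplicity of $2\arccosh\,3$ is then $2g-1+\floor{(g-1)^2/4}$. In genus $2$ there are three orbits: the one in a pants with connected complement, plus the two above.

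Your argument with this corrected count proves item (3) in the corrected form. The conclusion that the multiplicity is strictly increasing in $g$, and hence determines the topology, is unaffected.
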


We remark here that the injection in items (4) and (5) can be far from surjective. That is,
the multiplicity of an inf length can change. As can be seen for simple curves and figure 
$8$s.

\begin{proof} 
The inf length of an essential simple closed curve is $0$, and the figure $8$ being the shortest non-simple curve (\cite{BuserBook}) is guaranteed to be the next shortest inf length. The proof of item (1) is finished by noting that a figure $8$ has inf length equal to $2\arccosh \, 3$. The proof of item (2) is clear. The proof of item (3) follows from Proposition
\ref{enum: counting}. Item (4) follows from the fact that any curve in 
$\Sigma'$ is also a curve in $\Sigma$, and the fact that the inf length of a curve on a subsurface is the same as its inf length on any other surface that contains $\Sigma'$. Finally, item (5) follows from the fact that the inf length of a curve on an embedded pair of pants is exactly the length of that curve on an ideal pair of pants. 
\end{proof}

\begin{remark}
In item (4) of Proposition \ref{prop: infspecproperties} the multiplicity of the inf length of a curve can change depending on the surface it is contained in. For example, the multiplicity of figure $8$s changes in going to a bigger surface.
\end{remark}

{\bf Comparison with the length spectrum :} It is interesting to note that there is no value in the length spectrum of $X$ for which the curve type is determined. 
Even for the smallest length (that is, the systole) the only thing one can discern is that it is a simple curve on $X$. On the other hand, for the Inf spectrum some curve types are determined. We point out that the lengths of corkscrew geodesics 
$\{M_{k} \}$ for $k$ large are known to be the shortest geodesics (among all surfaces and all curves of self-intersection number at least $k$) \cite{MR4672786, MR4460231}.
On the other hand, the values $\{M_{k}\}$ may have multiplicity. The multiplicity can arise from other corkscrew geodesics in a pair of pants that are not in the same 
mapping class group orbit or they may come in a much more exotic fashion. Namely, we cannot rule out the possibility that there is a curve with less than $k$ self-intersections whose inf length equals $M_k$.

\section{Growth bounds on the Inf Spectrum using intersection growth}\label{sec:growth using intersection}

The Aougab-Souto Theorem \cite{souto2018counting} on the number of curve types with self-intersection less than $K$ yields the following inequality (using, in their notation, $\delta =\frac{1}{2}$):
\begin{equation} \label{eq: intersection growth}
e^{(\pi \sqrt{|\chi (\Sigma)|}-\frac{1}{2})\sqrt{K}}
\leq \#\bigl\{[\gamma]: i(\gamma,\gamma) \leq K \bigr\}
\leq e^{(4 \sqrt{2|\chi (\Sigma)|}+\frac{1}{2})\sqrt{K}}
\end{equation}
for $K$ bigger than some constant. Furthermore, given a curve $\gamma$, its optimal metric $X_{\gamma}$ satisfies
\begin{equation}\label{eq: optimal metric bound}
 \frac{1}{2} \log \frac{i(\gamma,\gamma)}{2} \leq \ell_{\gamma}(X_{\gamma}) 
 \leq c_1 \sqrt{i(\gamma,\gamma)}
\end{equation}
where $c_1=4 \sqrt{2|\chi (\Sigma)|}$. The lower bound on 
$\ell_{\gamma} (X_{\gamma})$ follows from \cite{basmajian2013universal} and the upper bound follows from \cite{souto2018counting}. Moreover, there exist infinite families of curve types whose optimal metrics exhibit logarithmic growth, and on the other extreme there exist infinite families of curve types whose optimal metrics exhibit square root growth. See \cite{BM} for these examples. 
Now, given the inequalities (\ref{eq: intersection growth}) and 
(\ref{eq: optimal metric bound}), it is natural to ask if they lead to good bounds on the growth rate of the Inf spectrum. For the lower bound an elementary computation yields
\begin{equation*}
\#\bigl\{[\gamma]: m_{[\gamma ]} \leq L \bigr\}
\geq \#\bigl\{[\gamma]: c_1 \sqrt{i(\gamma,\gamma)} \leq L \bigr\}
= \#\bigl\{[\gamma]: i(\gamma,\gamma) \leq \frac{L^2}{c_1^2} \bigr\}
\geq e^{bL}
\end{equation*}
where $b=\frac{\pi\sqrt{|\chi(\Sigma)|}-\frac{1}{2}}{4\sqrt{2}
\sqrt{|\chi(\Sigma)|}} <1.$ For the upper bound one gets double exponential growth as indicated in
 \cite{BM}. 
Thus, a straightforward application of these bounds produces a weaker 
exponential lower bound and considerably weaker upper bound than the ones in Theorem \ref{theoremA}. 

\vskip5pt

\centerline{\bf Statement about the use of AI}

Beyond minimal proofreading, no AI was used to prepare this manuscript. All ideas, math and writing were done by us.

\bibliographystyle{plain} 
\bibliography{references} 

{\it Addresses:} 
Graduate Center and Hunter College, CUNY, New York, USA\\
Fordham University, New York, USA\\
University of Fribourg, Switzerland

{\it Emails:} abasmajian@gc.cuny.edu\\
smondal6@fordham.edu\\
hugo.parlier@unifr.ch

\end{document}